\newcommand{\vek}[1]{\mathbf{#1}}
\newcommand{\Reals}{\mathbb{R}}
\newcommand{\infin}{first-order }
\newcommand{\Infin}{First-order }
\newcommand{\infinly}{first-order }
 \newcommand{\ovtrian}{\widehat{G}}
\newtheorem{theorem}{Theorem}
\newtheorem{lemma}{Lemma}
\begin{document}

\title{Combinatorial Characterization of the Assur Graphs from Engineering}
\author{Brigitte Servatius\thanks{ Mathematics Department, 
        Worcester Polytechnic Institute.  bservatius@math.wpi.edu} \and %
        Offer Shai\thanks{Faculty of Engineering, Tel-Aviv University. shai@eng.tau.ac.Il}
        \and Walter Whiteley
        \thanks{Department of Mathematics and Statistics, York University
         Toronto, ON, Canada. whiteley@mathstat.yorku.ca. Work supported
        in part by a grant from NSERC Canada}}

\maketitle

\begin{abstract}
We introduce the idea of Assur graphs, a concept originally
developed and exclusively employed in the literature of the
kinematics community. The paper translates the terminology,
questions, methods and conjectures from the kinematics terminology
for one degree of freedom linkages to the terminology of Assur graphs
as graphs with special properties in rigidity theory.  Exploiting
recent works in combinatorial rigidity theory
we provide mathematical characterizations of these graphs derived
from `minimal' linkages.  With these characterizations, we confirm a
series of conjectures posed by Offer Shai, and
offer techniques and algorithms to be exploited further in future
work.

\end{abstract}

% \tableofcontents
%%%%%%%%%%%%%%
\section{Introduction}
%%%%%%%%%%%%%%%%%
Working in the theory of mechanical linkages, the concept of
`Assur groups' was developed by  Leonid Assur (1878-1920),  a
professor at the Saint-Petersburg Polytechnical Institute. In 1914
he  published a treatise (reprinted in~\cite{Assur}) entitled
{\em Investigation of plane bar mechanisms with lower pairs from
the viewpoint of their structure and classification}. In the
kinematics literature it is common to introduce `Assur groups' (selected groups of
 links) 
as special minimal structures of links and joints
 with zero mobility,
from which it is not possible to obtain a simpler substructure of
the same mobility~\cite{PK}. Initially Assur's paper did not receive
much attention, but in 1930 the well known kinematician
I.I.~Artobolevski\u{i}, a member of the Russian academy of sciences,
adopted Assur's approach and employed it in his widely used
book~\cite{Arto}.  From that time on Assur groups are widely
employed in Russia and other eastern European countries, while their use
in the west is not as common. However, from time to time
Assur groups are reported in  research papers for diverse applications such
as: position analysis of mechanisms~\cite{Mitsi};
finding dead-center positions of planar
linkages~\cite{PK} and others.

The mechanical engineering terminology for linkages (kinematics) and
their standard counting techniques  are introduced via an example in the next section.
Central to Assur's method is the decomposition of complex linkages into
fundamental, minimal pieces whose
analyses could then be merged to give an overall analysis.
Many of these approaches for Assur groups were developed from a range of examples,
 analyzed
geometrically and combinatorially, but never defined with mathematical rigor.

In parallel, rigidity of bar and joint structures as well as motions of related mechanisms
have been studied for several
centuries by structural engineers and mathematicians.
Recently (since 1970) a focused development of a
mathematical theory using combinatorial tools was successful in many applications.
For example for planar graphs there is a simple geometric duality
theory, which, if applied to mechanisms and frameworks yields a relation
between statics and kinematics: any locked planar mechanism is dual to an
unstable planar isostatic framework (determinate truss)~\cite{Shai, CW2}.

The purpose of this paper is twofold. First, we want to draw
together the vocabulary and questions of mechanical engineering
with the rigidity theory terminologies of engineering and
mathematics. Second, we want to apply the mathematical tools of
rigidity theory, including the connections between statics and
kinematics, to give precision and new insights into the
decomposition and analysis of mechanical linkages.

The mathematical tools we need are briefly sketched with references provided in \S2.3-2.7.
Our main result is the description of Assur graphs (our term for Assur groups) in Engineering terms 
(\S2.1,2.2) and 
its reformulation  in mathematical terms.
We show that our mathematical reformulation allows us in a natural way to
embed Assur's techniques in the theory of frameworks (\S3) and bring the results back
to linkages.
In the process we veriy several conjectured characterizations
 presented by Offer Shai in his talk concerning
the generation of Assur graphs and the decomposition of linkages into Assur graphs,
at the 2006 Vienna Workshop on Rigidity and Flexibility \S3.1.  We also give algorithmic processes
for decomposing general linkages into Assur graphs, as well as for generating all Assur graphs (\S3.2-3.3). 

In a second paper~\cite{SSW2}, we will apply the geometric theory of
bar-and-joint framework rigidity in the plane to explore additional
properties and characterizations of Assur graphs.  This exploration
includes singular (stressed) positions of
the frameworks, explored using reciprocal diagrams,
and the introduction of  `drivers', which appear in passing
in the initial example in the next section.

%%%%%%%%%%%%%%%%%
\section{Preliminaries}
%%%%%%%%%%%%%%%%%
In the first two sub-sections we present the mechanical engineering vocabulary,
problems and approaches
through an example.  These offer the background and
the motivation for the concepts of the paper, but do not yet give the
formal mathematical definitions.
In the remaining five sub-sections we give the framework basics needed
 to mathematically describe these approaches.

%%%%%%%%%%%%%%%%%%%%%%%
\subsection{Linkages  and  Assur graphs }
A linkage is a mechanism consisting of rigid bodies, the {\em links},
held together by joints. Since we only consider linkages in the plane,
all our joints are pin joints, or pins. A complex linkage may be efficiently studied by
decomposing it into simple pieces, the Assur groups.  (Engineers use the term group to mean a
specified set of links. In mathematics the word group is used for an algebraic structure, but
most of the tools we will use come from graph theory, so the word graph seems more natural
and we will use it as soon as we start our mathematical section). We introduce these ideas via
an example.

Figure~\ref{tracfig01a} depicts an excavator attached with a linkage system. In the
following, we illustrate how the schematic drawing of this
system is constructed and how it is decomposed into Assur groups.

\begin{figure}[htb]
\centering
\includegraphics{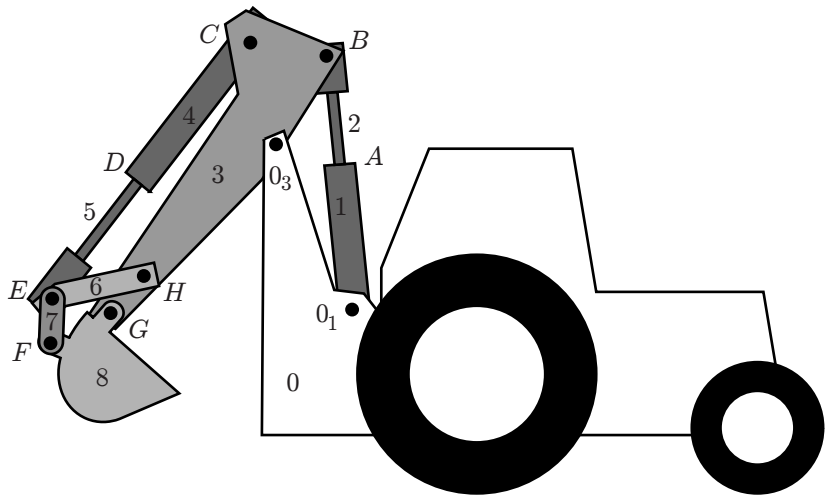}
\caption{The excavator with its kinematic system.\label{tracfig01a}}
\end{figure}

In order to get a uniform scheme, termed {\em structural scheme},  it is
common   to represent all the  connections between the links by
revolute joints as appears in Figure~\ref{tracfig02b}. Here joints~$0_1$~and~$0_3$
attach the excavator to the vehicle (fixed ground) and these special
joints are marked with a small hatched triangle, and are called {\em pinned joints}.
All other joints are called {\em inner} joints. A link
which can be altered  (e.g. by changing its length)
is called a {\em driving link}.
A driving link can be thought of
as driving or changing the distance
between its endpoints like the pistons in our excavator example,
which may be modeled in the
structural scheme by a rotation of an inserted link~1.
\begin{figure}[htb]
\centering
\includegraphics{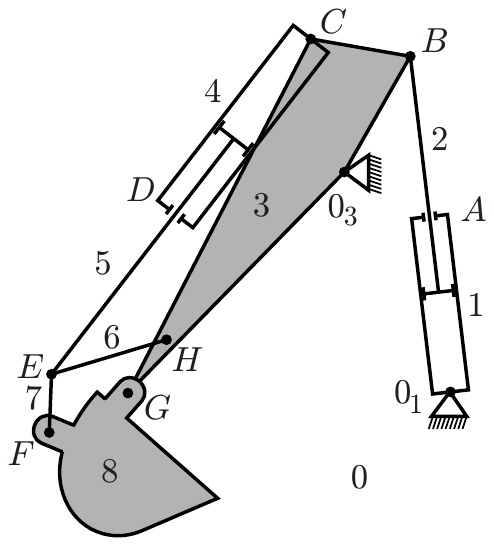}\qquad
\includegraphics{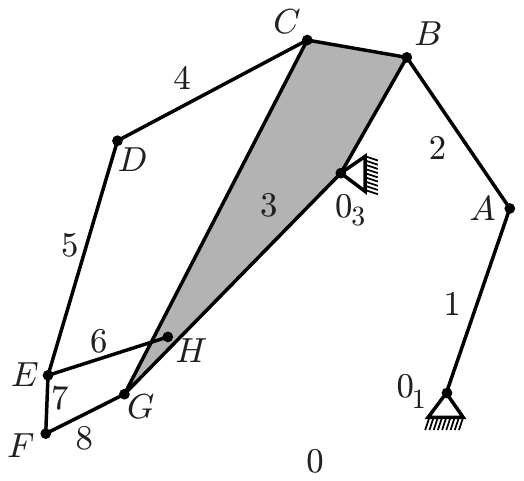}
\caption{The unified structural scheme of the kinematic  system of the excavator.\label{tracfig02b}}
\end{figure}

Once the engineering system is represented in the structural scheme,
to start the analysis, all the driving links are deleted and
replaced by pinned joints (mathematically speaking the driving links are
contracted and their endpoints identified). In the current example, links 1 and 4 are
deleted and joints A and D are pinned. Then, the system is
decomposed into three Assur groups, each consisting of two links, one
inner joint and two pinned joints. In the literature the Assur
groups of this type are referred to as dyads~\cite{Norton}. The order
of the decomposition is important. If an inner joint of a group, $G_1$,
becomes a pinned joint in group $G_2$, then $G_1$ should precede $G_2$.

 In our example (see Figure~\ref{tracfig04}), the unique order of decomposition is:
Dyad$_{1} = \{2,3\}$; Dyad$_{2}=\{5,6\}$; Dyad$_{3}=\{7,8\}$.

\begin{figure}[htb]
\centering
\includegraphics{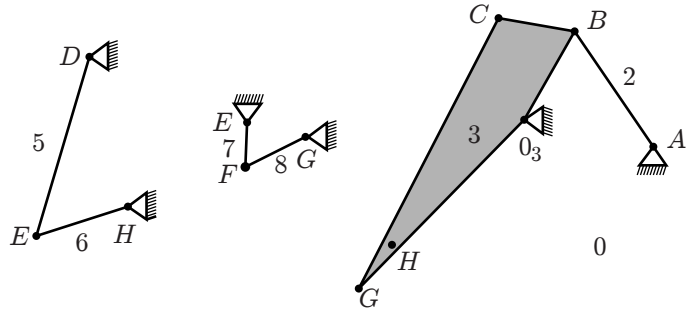}
\caption{Assur group decomposition of
the structural scheme of the kinematic system of the excavator.\label{tracfig04}}
\end{figure}

%%%%%%%%%%%%%%%%%%%%%%%%%%%%%%%%%%%%%
\subsection{Degree of freedom of a mechanism:  Gr\"{u}bler's equation}
The degree of
freedom (DOF) of a linkage is the number of independent coordinates or measurements
required to define its position.

In mechanical engineering~\cite{Norton}, Gr\"{u}bler's equation
relates the (least number of internal) degrees of freedom $F$ of a
linkage mechanism to the number $L$ of links and the number $J$ of
joints in the mechanism. In the plane, if $J_i$ is the number of
joints from which $i$ links emanate, $i\geq 2$ then
\begin{equation}\label{Grubler1}
F  =3(L-1)-2\sum (i-1)J_i
\end{equation}

In the example above $L=9$ because the fixed ground is considered a
link, and $\sum (i-1)J_i= 11$, because the revolute joint $E$ is
counted twice as it pins links 5, 6, and 7. By Gr\"{u}bler's
equation we get $F=3(9-1)-2\cdot11=2$, which is correct as there are
two driving links (two distances being controled).
 If these two driving links are removed and their
ends pinned (identified) as in the example analysis, we have only 7 links left
and the number of revolute joints is now 9, so $F=0$. This is
another indication that the drivers work independently.

Note that Gr\"{u}bler's equation only gives a lower bound on the degree
of freedom and  there are many cases where the actual DOF is larger
than the predicted one~\cite{Norton}.
If a linkage contains a sub-collection of links pinned in such a way among
themselves that the Gr\"{u}bler count is negative for the sub-collection, then
the predicted DOF for the linkage might be smaller than the actual DOF (see Figure~\ref{miscount}(a)).
This situation can be detected and corrected by combinatorial means as we will describe in
Laman's Theorem, see \S2.5 Theorem~\ref{OverviewTheorem}.

Counting techniques, however, cannot detect special geometries, e.g. parallelism or
symmetry of links, which also might lead to a false  Gr\"{u}bler DOF prediction.
We will examine
this type of geometric singularity of a combinatorially correct graph in~\cite{SSW2}.

\begin{figure}[htb]
\centering
\includegraphics{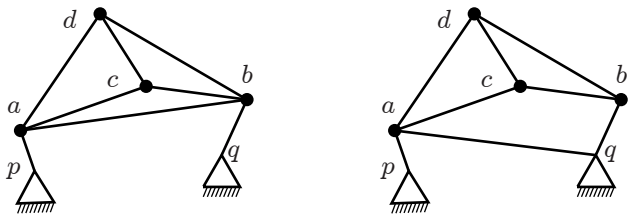}
\caption{In some cases Gr\"{u}bler's equation provides a false answer (a), due to
an overcounted subgraph.  
In others (b) it correctly predicts a pinned isostatic framework (determinate truss).
\label{miscount}}
\end{figure}

%%%%%%%%%%%%%%%
\subsection{Frameworks}

For a linkage in which all the links are bars,  with revolute joints at
the two endpoints of the bar, we can rewrite Gr\"{u}bler's equation
in terms of graph theory, by introducing a graph whose vertices, $V$, are
the joints and whose edges, $E$, are the bars. With $V_i$ denoting
the set of vertices of valence $i$,  Gr\"{u}bler's equation
becomes
$$F=3(|E|-1)-2\sum (i-1)J_i=3(|E|-1)-2\sum i|V_i| +2\sum |V_i|$$
$$=
3|E|-3-4|E|+2|V|=2|V|-3 -|E|.$$
So if the edges of a graph embedded in
the plane are interpreted as rigid bars and the vertices as revolute joints,
the graph needs to have at least $2|V|-3$ edges in order to have no
internal degrees of freedom, an observation made already by Maxwell. The count
$2|V|-3$ will
be central to the rest of the paper.

By a {\em framework} we mean a graph $G = (V,E)$ together with a
configuration $\vek{p}$ of $V$ into Euclidean space, for our
purpose the Euclidean plane. We will always assume that the two
ends of an edge (a bar) are distinct points). A motion of the
framework is a displacement of the vertices which preserves the
distance between adjacent vertices, and a framework is {\em rigid}
if the only motions which it admits arise from congruences.

Let us assume that the location $\vek{p}_{i}$ of a vertex is a
continuous function of time, so that we can differentiate with
respect to time. If we consider the initial velocities,
$\vek{p}'_{i}$, of the endpoints $\vek{p}_{i}$ of a single edge
$(i,j)$ under a continuous motion of a framework, then, to avoid
compressing or extending the edge, it must be true that the
components of those velocities in the direction parallel to the
edge are equal, i.e.\
  \begin{equation}\label{InfEqn}
      (\vek{p}_i - \vek{p}_j)\cdot(\vek{p}'_i - \vek{p}'_j) = 0.
  \end{equation}
A function assigning vectors to each vertex of the framework such
that equation~\ref{InfEqn} is satisfied at each edge is called an
{\em \infin  motion.} If the only \infin motions are
{\em trivial}, that is, they arise from \infin  translations
or \infin rotations of $\Reals^2$,  then we say that the framework is
\infinly  rigid in the plane.  \Infin  rigidity
implies rigidity, see for example~\cite{Connelly}.

In our excavator example not all links are bars. In the structural scheme
link 8 is modeled by a bar because it contains only two pins, while link~3,
which contains~5 pins appears as a ``body". We can replace such a body by a
rigid subframework on these~5 vertices  (or more).
In general, any linkage consisting of rigid bodies held together by pin joints
can be modeled as a framework by replacing the bodies with rigid frameworks.

%%%%%%%%%%%%%%%%%%%%%
\subsection{The rigidity matrix}

Any graph $G$ can be considered a subgraph of
the complete graph $K_n$ on the vertex set
$V = \{1,\ldots, n\}$, where $n$ is large enough.
Let $\vek{p}$ be a fixed {\em configuration} (embedding) of $V$ into $\Reals^2$.

   Equation~\ref{InfEqn} defines a system of linear equations,
indexed by the edges $(i,j)$, in the variables for the unknown velocities $\vek{p}'_i$. The
matrix $R(\vek{p})$ of this system is a real $n(n-1)/2$ by $2n$
matrix and is called the {\em rigidity matrix}. As an example,
we write out coordinates of $\vek{p}$ and of the
rigidity matrix $R(\vek{p})$, in the case $n=4$.
  \[  \vek{p} = (\vek{p}_{1},\vek{p}_{2},\vek{p}_{3},\vek{p}_{4}) =
        (p_{11},p_{12},p_{21},p_{22},p_{31},p_{32},p_{41},p_{42}); \]
  \[ \left[\begin{array}{cccccccc}
   _{p_{11}-p_{21}} & _{p_{12}-p_{22}} & _{p_{21}-p_{11}} & _{p_{22}-p_{12}} &
   _{0}             & _{0}             & _{0}             & _{0}              \\

   _{p_{11}-p_{31}} & _{p_{12}-p_{32}} & _{0}             & _{0}             &
   _{p_{31}-p_{11}} & _{p_{32}-p_{12}} & _{0}             & _{0}              \\

   _{p_{11}-p_{41}} & _{p_{12}-p_{42}} & _{0}             & _{0}             &
   _{0}             & _{0}             & _{p_{41}-p_{11}} & _{p_{42}-p_{12}}  \\

   _{0}             & _{0}             & _{p_{21}-p_{31}} & _{p_{22}-p_{32}} &
   _{p_{31}-p_{21}} & _{p_{32}-p_{22}} & _{0}             & _{0}              \\

   _{0}             & _{0}             & _{p_{21}-p_{41}} & _{p_{22}-p_{42}} &
   _{0}             & _{0}             & _{p_{41}-p_{21}} & _{p_{42}-p_{22}}  \\

   _{0}             & _{0}             & _{0}             & _{0}             &
   _{p_{31}-p_{41}} & _{p_{32}-p_{42}} & _{p_{41}-p_{31}} & _{p_{42}-p_{32}}

   \end{array}\right]    \]

   A framework
$(V,E,\vek{p})$ is infinitesimally rigid (in dimension $2$) if and
only if the submatrix of $R(\vek{p})$ consisting of the rows
corresponding to $E$ has rank $2n-3$.
We say that the vertex set $V$ is in generic position if the determinant
of any submatrix of $R(\vek{p})$ is zero only if it is identically
equal to zero in the variables $\vek{p}'_i$.
For a generically embedded vertex set, linear dependence of the rows of
$R(\vek{p})$ is determined by the graph induced by the edge set under
consideration. The rigidity properties of a graph are the same for any
generic embedding. A graph $G$ on $n$ vertices  is {\em generically rigid} if the rank
$\rho$ of its rigidity matrix $R_G (\vek{p})$ is $2n-3$, where $R_G (\vek{p})$ is
the submatrix of $R(\vek{p})$ containing all rows corresponding to the edges of $G$,
for a generic embedding of $V$. The (generic) DOF of
$G$ is defined to be $ 2n-3-\rho$.

%%%%%%%%%%%%%%%%%%%%%%%%
\subsection{Results for the plane}

Linear dependence of the rows of the rigidity matrix defines a matroid on
the set of rows and for generic configurations  we speak about {\em independent edge sets}
rather than independent rows of $R(\vek{p})$. For a generic embedding of $n$ vertices
into $\Reals^2$ we call the matroid on the complete graph obtained from $R(\vek{p})$
the generic rigidity matroid in dimension $2$ on
$n$ vertices, $\mathfrak{R}_2(n)$.

The following theorem characterizes  $\mathfrak{R}_2(n)$.
\begin{theorem}\label{OverviewTheorem}

           {\em (Laman~\cite{Laman})}
                The independent sets of $\mathfrak{R}_2(n)$ are those sets of edges
                which satisfy Laman's condition:
                \begin{equation}\label{LamansIneq}
                    |F| \leq 2|V(F)| - 3
                    \mbox{~for all $F \subseteq E, F \not = \emptyset $};
                \end{equation}
\end{theorem}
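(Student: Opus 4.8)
The plan is to prove both directions of the characterization. The necessity of Laman's condition~\eqref{LamansIneq} for independence is the easier half: if $F \subseteq E$ is an independent set of edges, then the $|F|$ rows of $R(\vek{p})$ indexed by $F$ are linearly independent, and since all nonzero entries in these rows lie in the $2|V(F)|$ columns corresponding to $V(F)$, the rank of this row submatrix is at most $2|V(F)| - 3$ (the rigidity matrix of any framework on $|V(F)|$ vertices has rank at most $2|V(F)|-3$, because the trivial infinitesimal motions always contribute a $3$-dimensional subspace to the kernel, for $|V(F)|\ge 2$). Hence $|F| \le 2|V(F)| - 3$. First I would state this as a lemma, noting carefully the small cases $|V(F)| \in \{1,2\}$.

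For sufficiency, I would argue that a set $E$ satisfying~\eqref{LamansIneq} is independent in $\mathfrak{R}_2(n)$. The cleanest route is via Henneberg-type inductive construction. One shows combinatorially that any graph $G=(V,E)$ satisfying Laman's condition can be built up from a single edge by a sequence of two moves: (I) adding a new vertex joined by two edges to existing vertices (a "$0$-extension" or Henneberg I move), and (II) subdividing an existing edge with a new vertex and adding one further edge from that vertex to a third existing vertex (a "$1$-extension" or Henneberg II move). The key combinatorial claim is that a graph satisfying Laman's count with $|E| = 2|V|-3$ always has a vertex of degree $2$ or degree $3$; a counting argument (sum of degrees equals $2|E| = 4|V|-6 < 4|V|$) forces a vertex of degree $\le 3$, and degree exactly $3$ (the degree-$2$ case being immediate) is handled by showing one of the three incident edges can be removed and reinserted across a suitable pair so that the Laman count is preserved on the smaller graph — this uses the submodularity of the function $F \mapsto 2|V(F)|-3$ on the edge sets of a Laman-sparse graph.

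Then I would verify that both Henneberg moves preserve generic independence of the rigidity matrix. For move~(I), placing the new vertex generically, its two new rows have a $2\times 2$ block (in its own coordinate columns) that is generically nonsingular, so the new rows are independent of all previous rows regardless of where the old vertices sit; independence is preserved. For move~(II), one argues that a generic placement of the new vertex can be chosen on the line through the two endpoints of the subdivided edge at the moment of analysis, or more robustly, one performs a rank computation showing the three new rows (two from the subdivided edge, one from the new edge) together with the deleted old row span the same space generically; here one exhibits a single configuration where independence holds and invokes that generic independence is an open (Zariski) condition, so it holds generically. Assembling the induction: the base case (one edge, or the graph $K_2$) is trivially independent, and each move preserves independence, so every Laman graph with $|E|=2|V|-3$ is independent; a general set satisfying~\eqref{LamansIneq} extends to such a maximal set and is therefore independent as a subset.

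The main obstacle I expect is the sufficiency direction, and specifically the degree-$3$ case of the Henneberg reduction: showing that when the chosen vertex $v$ has degree $3$ with neighbors $a,b,c$, at least one of the edges $va$, $vb$, $vc$ can be deleted and replaced by an edge among $\{a,b,c\}$ not already present so that the resulting smaller graph still satisfies Laman's condition. This requires the submodular-function argument to rule out the bad case where every such replacement creates an overcounted subgraph — one shows two such "blocking" subgraphs would have to overlap in a way that violates sparsity of $G$ itself. I would isolate this as a self-contained combinatorial lemma before doing the rigidity-matrix rank computations, since the linear-algebra steps are routine once the combinatorial decomposition is in hand.
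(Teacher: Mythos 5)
The paper offers no proof of this statement at all: it is quoted as Laman's theorem with a citation to~\cite{Laman}, and the inductive machinery your argument rebuilds is exactly what the paper then records separately as Theorem~\ref{Henneberg} (the Henneberg construction, cited to~\cite{TW}). So you are not diverging from the paper so much as supplying the standard proof that the paper delegates to the literature. Your outline is the classical one and is correct in structure. Necessity is fine as you state it: the rows indexed by $F$ live in the $2|V(F)|$ columns of $V(F)$, and the trivial infinitesimal motions force the rank there below $2|V(F)|-3$ (with $|V(F)|\geq 2$ guaranteed since bars have distinct endpoints). For sufficiency you correctly identify the real content, namely the degree-$3$ reduction: in a graph with $|E|=2|V|-3$ satisfying \eqref{LamansIneq} the degree count gives a vertex of degree $2$ or $3$, the degree-$2$ case reverses the $2$-valent vertex addition of Figure~\ref{extendfig01}(a), and the degree-$3$ case needs the lemma that some non-edge among the three neighbors can be added after deleting $v$ without violating \eqref{LamansIneq}. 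You flag this and sketch the right mechanism (two ``blocking'' tight subgraphs, one for each candidate pair, would union to a subgraph violating sparsity together with $v$); a complete write-up must actually carry out that union/intersection count, since it is the only nontrivial combinatorial step. Your linear-algebra verifications of the two moves are also the standard ones (non-collinearity of $v,u,w$ for move (I); the specialization of $v$ onto the line of the removed edge plus openness of generic independence for move (II)). In short: the proposal is a correct outline of the proof the paper chose to cite rather than reproduce, with the one genuinely hard lemma correctly isolated but left as a sketch.
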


   Laman's Theorem was proved in 1970 and it was this theorem
that promoted the use of matroids to attack rigidity questions. There are many equivalent
axiom systems known for matroids. These can be used to reveal structural properties of various
types and their relationships.  The fact that
matroids are exactly those
structures for which independent sets can
be constructed greedily has important algorithmic conseqences.

From the count condition
in the inequalities~(\ref{LamansIneq}) for independent edge sets it is straight
forward to deduce count conditions for {\em bases} of $\mathfrak{R}_2(n)$ (edge sets inducing
minimally rigid or isostatic graphs), as well as for minimally dependent sets, or {\em circuits},
which will play a fundamental role in our analysis and will be treated in the next section.

Independent sets of $\mathfrak{R}_2(n)$ may be constructed inductively~\cite{TW}.
 Given an
independent edge set $E$ in $\mathfrak{R}_2(n)$, we can extend $E$ by new edges provided that
the inequalities~\ref{LamansIneq} are not violated.  Starting with an independent
set (e.g. a single edge):
\begin{enumerate}
\item[(a)] We can attach a new vertex $v$
by two new edges $x=(v,u)$ and $y=(v,w)$ to the subgraph of $G$
induced by $E$ and $E\cup\{x,y\}$
is independent, see Figure~\ref{extendfig01}a.  This is also called {\em $2$-valent vertex
addition}.
\item[(b)]
Similarly, we can attach a new vertex $v$ by three new edges to the endpoints of an edge $e \in E$ plus any other
vertex in the subgraph of $G$ induced by $E$, and ${E\setminus e}\cup\{x,y,z\}$ is independent,
see Figure~\ref{extendfig01}b. This
operation is called {\em edge-split}, because the new vertex $v$ is
thought of as splitting the edge $e$.
\end{enumerate}
\begin{figure}[htb]
\centering
\includegraphics{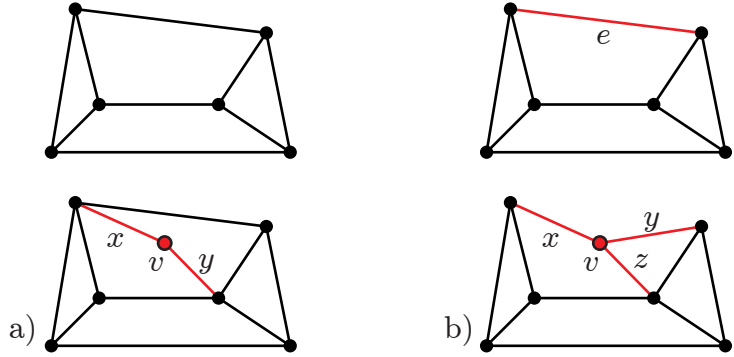}
\caption{Building up an independent set of edges by:
$2$-valent vertex~(a) and edge-split~(b)\label{extendfig01}}
\end{figure}
These {\em Henneberg techniques} developed in~\cite{TW} have become standard in rigidity theory, see also~\cite{GSS}, and
when we resort to ``the usual arguments'' within some of the proofs to come, we have these standard
proof techniques in mind. For further reference we state the following well known result.

\begin{theorem}[Henneberg~\cite{TW}]
Any independent set in $\mathfrak{R}_2(n)$ can be obtained from a single edge by a sequence
of $2$-valent vertex additions and edge-splits.\label{Henneberg}
\end{theorem}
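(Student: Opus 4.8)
The forward direction---that a $2$-valent vertex addition or an edge-split applied to an independent set again yields an independent set---is exactly the content of items (a) and (b) above, so what remains is the converse: every independent set $F$ is obtained from a single edge by such moves. (It is most natural to prove the equivalent statement for bases of $\mathfrak{R}_2(n)$, i.e.\ isostatic graphs, from which the general case follows; the induction is the same.) The plan is induction on $|V(F)|$: for $|V(F)|=2$, $F$ is a single edge, and the inductive step is to delete a well-chosen vertex so as to undo one of the two moves, using Laman's Theorem~\ref{OverviewTheorem} to monitor independence purely by counting.

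First I would exhibit a low-degree vertex. Since $F$ is independent, $2|F|=\sum_v \deg(v)\le 4|V(F)|-6$, so some vertex has degree at most $3$; in the relevant situation (minimum degree at least $2$) some vertex $v$ has degree $2$ or $3$, and when no degree-$2$ vertex exists every vertex has degree exactly $3$ and a short count produces at least six degree-$3$ vertices. If $\deg(v)=2$, delete $v$ and its two edges: the result is independent on one fewer vertex, so by induction it is built from a single edge, and the $2$-valent vertex addition restoring $v$ completes the construction.

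The substance is the case $\deg(v)=3$ with neighbours $a,b,c$, where I would perform an \emph{inverse edge-split}: delete $v$ and its three edges and put back one of the edges $ab$, $ac$, $bc$, chosen not already present and so that the resulting set $F'$ is independent; induction then applies to $F'$, and the edge-split restoring $v$ finishes the proof. The crux is that an admissible choice always exists. I would argue by contradiction: if all of $ab$, $ac$, $bc$ are blocked, then by Laman's Theorem each is either already an edge after deleting $v$ or has both endpoints inside a \emph{tight} set $X$ (one with $i_F(X)=2|X|-3$, where $i_F(X)$ counts the edges of $F$ joining two vertices of $X$). Using submodularity of $X\mapsto 2|X|-3-i_F(X)$---equivalently, that the union of two tight sets meeting in at least two vertices is tight---these three tight sets amalgamate into one set $W\ni a,b,c$ with $i_F(W)\ge 2|W|-4$; adjoining $v$, joined to all of $a,b,c$, yields a set with at least $2|W|-1$ edges on $|W|+1$ vertices, violating Laman's condition and contradicting the independence of $F$---unless $W$ is in the borderline state $i_F(W)=2|W|-4$ where the count lands exactly at the Laman bound. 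This borderline sub-case is the main obstacle; the standard remedy, which I would adopt, is to choose the degree-$3$ vertex $v$ (there are several, by the count above) so as to minimize the size of the associated tight subgraph $W\cup\{v\}$, then descend into that subgraph to locate a smaller such configuration, contradicting minimality and forcing the non-borderline case---at which point the inverse edge-split goes through and the induction closes.
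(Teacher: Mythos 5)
First, a point of comparison: the paper does not prove Theorem~\ref{Henneberg} at all --- it is quoted as a known result with a citation to~\cite{TW} --- so there is no in-paper argument to measure yours against, and I can only assess the proposal on its own terms. Your overall strategy (induction on the number of vertices, peeling off a vertex of degree $2$ or $3$, with the degree-$3$ case handled by an inverse edge-split whose admissibility is certified by Laman counts and tight sets) is exactly the standard Henneberg argument, and restricting to bases is the right move: as literally stated for arbitrary independent sets the theorem fails (a path on three vertices, or two disjoint edges, is independent but not reachable by these two moves), so your parenthetical claim that ``the general case follows'' should be dropped rather than defended.

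The genuine gap is in the amalgamation step of the degree-$3$ case. From tight sets $X_{ab}\supseteq\{a,b\}$, $X_{ac}\supseteq\{a,c\}$, $X_{bc}\supseteq\{b,c\}$ in $F-v$ you extract only $i_F(W)\ge 2|W|-4$ for $W=X_{ab}\cup X_{ac}\cup X_{bc}$ (here $i_F(X)$ is the number of edges with both ends in $X$); re-attaching $v$ then gives at least $2|W|-1=2(|W|+1)-3$ edges, which merely \emph{meets} the Laman bound and yields no contradiction. You patch this with a minimal-counterexample descent that is never executed: it is not specified what the ``smaller configuration'' inside $W\cup\{v\}$ is, why one exists, or why the recursion terminates. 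That patch is also unnecessary, because the correct count closes the case outright. If some two of the three tight sets meet in at least two vertices, their union is tight (by the submodularity you cite) and already contains $a,b,c$. Otherwise the pairwise intersections are exactly $\{a\}$, $\{b\}$, $\{c\}$, the triple intersection is empty, no edge is counted twice, and inclusion--exclusion gives
$i_F(W)\ \ge\ (2|X_{ab}|-3)+(2|X_{ac}|-3)+(2|X_{bc}|-3)\ =\ 2(|W|+3)-9\ =\ 2|W|-3,$
so $W$ itself is tight and contains $a,b,c$. In either case $W\cup\{v\}$ carries at least $2|W|=2|W\cup\{v\}|-2$ edges, violating condition~(\ref{LamansIneq}) and contradicting independence of $F$. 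With that repair your induction closes; as written, the degree-$3$ case --- which is the entire substance of the theorem --- is not proved.
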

Figure~\ref{henny01} illustrates a sequence described in Theorem~\ref{Henneberg}.
\begin{figure}[htb]
\centering
\includegraphics{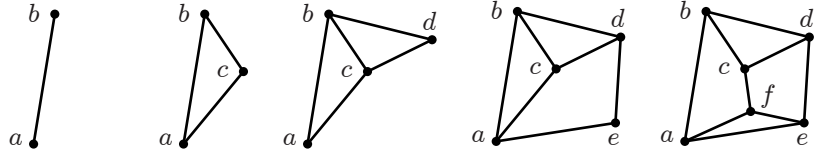}
\caption{A Henneberg sequence constructing an isostatic graph.\label{henny01}}
\end{figure}
%%%%%%%%%%%%%%%%%%%%%%%
\subsection{Rigidity circuits}
%%%%%%%%%%%%%%%%%%%%%%%%
Minimally dependent sets, or circuits, in $\mathfrak{R}_2(n)$ are edge sets
$C$ satisfying
$|C|=2|V(C)|-2$ and every proper non-empty subset of $C$ satisfies inequality (\ref{LamansIneq}).
Note that these circuits, called {\em rigidity circuits}, always have an even number of edges.
We will, as is commonly done, not distinguish between edge sets and the graphs they induce.

  Similarly to the inductive constructions of independent sets (see Figure~\ref{extendfig01}), all
circuits in $\mathfrak{R}_2(n)$ can be constructed from a tetrahedron (the complete graph on four
vertices) by two simple
operations, see~\cite{Berg-Jordan}, namely {\em edge-split } as in Figure~\ref{extendfig01}b,
and {\em 2-sum}, where
the 2-sum of two (disjoint) graphs is obtained by ``gluing'' the graphs along an edge
and removing
the glued edge, see Figure~\ref{twosum}.
\begin{figure}[htb]
\centering
\includegraphics{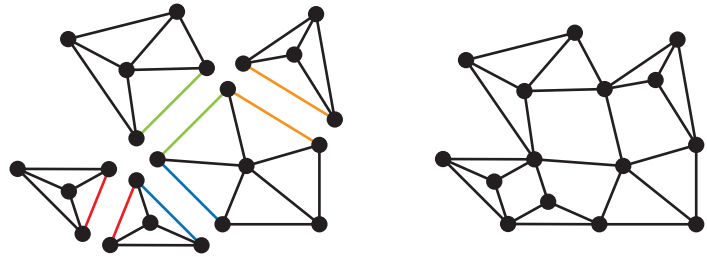}
\caption{2-sums taken along the lined up edge pairs  combine circuits into a
larger circuit. \label{twosum}}
\end{figure}

\begin{theorem}[Berg and Jordan~\cite{Berg-Jordan}]
Any circuit in $\mathfrak{R}_2(n)$ can be obtained from $K_4$ by a sequence
edge-splits and 2-sums.\label{BJ}
\end{theorem}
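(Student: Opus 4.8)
\medskip
\noindent The result is a theorem of Berg and Jordan; the plan I would follow is an induction on the number $n=|V(C)|$ of vertices of the rigidity circuit $C$. The base case rests on two elementary counts. First, a rigidity circuit has minimum degree at least $3$: if $\deg(v)\le2$ then $C-v$ is a nonempty proper subgraph with at least $(2n-2)-2=2(n-1)-2$ edges on $n-1$ vertices, violating (\ref{LamansIneq}). Second, from $2|E(C)|=\sum_v\deg(v)$ and $|E(C)|=2n-2$ one gets $n\ge4$, with equality only for $K_4$, and also $\sum_v(\deg(v)-3)=n-4$, so every rigidity circuit has at least four vertices of degree exactly $3$. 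Hence $K_4$ is the unique smallest rigidity circuit and is the base of the induction.

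\medskip
\noindent For the inductive step let $n\ge5$. A rigidity circuit is $2$-connected: a cut vertex would split $C$ into two proper subgraphs meeting in a single vertex, and summing (\ref{LamansIneq}) over the two pieces yields $|E(C)|\le2n-4$, a contradiction. If $C$ is not $3$-connected, take a $2$-vertex cut $\{a,b\}$. One checks easily that $ab\notin E(C)$ (otherwise the two sides are proper subgraphs sharing only the edge $ab$, and summing (\ref{LamansIneq}) again contradicts $|E(C)|=2n-2$), and that a count then forces each side to be isostatic; with a suitable choice of the cut this yields a decomposition of $C$ as the $2$-sum of two rigidity circuits on fewer vertices, to which the inductive hypothesis applies, after which one reassembles by a $2$-sum.

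\medskip
\noindent The case where $C$ is $3$-connected with $n\ge5$ is the crux. The plan is to find an \emph{admissible} vertex: a degree-$3$ vertex $v$ with $N(v)=\{x,y,z\}$ such that some pair among $x,y,z$ is a non-edge --- one must exist, since otherwise $\{v,x,y,z\}$ induces $K_4=C$ --- say $\{x,y\}$, and $(C-v)+xy$ is again a rigidity circuit. Since $(C-v)+xy$ has exactly $2(n-1)-2$ edges it is dependent, so the only obstruction is that it \emph{properly} contains a circuit, which happens exactly when $C-v$ is dependent or when $C-v$ has a proper Laman-tight subgraph containing both $x$ and $y$. Using that there are at least four degree-$3$ vertices, I would argue that $3$-connectivity prevents all of them from being blocked in every direction: the standard tool is the submodularity of $X\mapsto 2|X|-i_C(X)$, where $i_C(X)$ is the number of edges of $C$ inside $X$, which makes the Laman-tight vertex sets behave well under intersection and union; passing to an inclusion-minimal such ``blocker'' and using $3$-connectivity to limit how it can meet the neighbourhood of a degree-$3$ vertex, one exhibits an admissible $v$. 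The inverse edge-split at $v$ then produces a rigidity circuit $C'$ on $n-1$ vertices; by induction $C'$ is built from $K_4$ by edge-splits and $2$-sums, and one further edge-split recovers $C$. The delicate point, and the only place the hypotheses are genuinely used, is this last existence-of-an-admissible-vertex step; the counting and the $2$-sum reduction are routine Henneberg-style bookkeeping.
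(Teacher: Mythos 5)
The paper does not prove this statement at all: Theorem~\ref{BJ} is imported verbatim from Berg and Jordan~\cite{Berg-Jordan} as a black-box ingredient, so there is no in-paper proof to compare yours against. Measured on its own terms, your outline correctly reproduces the architecture of the published argument, and the elementary counting steps check out: minimum degree $3$, the identity $\sum_v(\deg(v)-3)=n-4$ giving at least four degree-$3$ vertices and $K_4$ as the unique smallest circuit, $2$-connectivity, and the fact that a $2$-separation of a circuit has a non-adjacent cut pair with both sides isostatic. (One small point you gloss over in the $2$-sum step: the count shows each side is isostatic, but to conclude that each side plus the virtual edge $ab$ is itself a \emph{circuit}, rather than merely dependent, you need an extra argument, e.g.\ circuit elimination applied to the two circuits through $ab$ on either side.)

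The genuine gap is in the $3$-connected case. Everything in your proposal up to that point is routine bookkeeping, as you yourself say; but the statement that a $3$-connected circuit on at least five vertices always admits an \emph{admissible} degree-$3$ vertex --- one whose inverse edge-split lands back in the class of circuits --- is not a ``delicate last step'' of an otherwise complete proof. It \emph{is} the theorem: it is Connelly's conjecture, and establishing it is the entire content of the Berg--Jordan paper, requiring a sustained analysis of how the tight (``blocker'') sets, organized via the submodularity of $X\mapsto 2|X|-i_C(X)$, can simultaneously obstruct all four-plus degree-$3$ vertices. Your paragraph names the right tools but gives no argument for why $3$-connectivity defeats every configuration of blockers; as written, the proposal reduces the theorem to its hardest ingredient and stops there. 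So this is a faithful plan of attack, not a proof.
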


%%%%%%%%%%%%%%%%
\subsection{Isostatic Pinned Framework}
%%%%%%%%%%%%%%%%%%%
Given a framework associated with a linkage,  we are interested in its internal motions,
not the trivial ones, so following the mechanical engineers we pin the framework
 by prescribing, for example, the coordinates of the endpoints of an
edge, or in general by fixing the position of the vertices of some rigid subgraph, see Figure~\ref{pinsub01}. We call
these vertices with fixed positions {\em pinned}, the others {\em inner}. (Inner vertices are sometimes called
{\em free} or {\em unpinned} in the literature.)
Edges among pinned vertices are irrelevant to the analysis of a pinned framework.
We will denote a pinned graph by $G(I,P;E)$, where $I$ is the set of inner vertices, $P$ is the
set of pinned vertices, and $E$ is the set of edges, where each edge has at least one endpoint in $I$.

A pinned graph $G(I,P;E)$ is said to satisfy the
{\em Pinned Framework Conditions} if $|E|=2|I|$ and for all subgraphs
$G'(I', P'; E')$ the following conditions hold:
%, $V'=I'\bigcup P'$
\begin{enumerate}
\item          $|E'|\leq 2|I'|$ if $|P'| \geq 2$,

\item          $|E'|\leq 2|I'| -1$ if $|P'|=1$ ,  and

\item          $|E'|\leq 2|I'|-3$ if $P' = \emptyset$.
\end{enumerate}

We call a pinned graph $G(I,P;E)$ {\em pinned isostatic} if $E=2|I|$ and $\widetilde{G}= G\cup K_{P}$ is rigid
as an unpinned graph, where $K_{P}$ is a complete graph on a vertex set containing all pins (but no
inner vertices).
In other words,
we ``replace'' the
pinned vertex set by a complete graph containing the pins and call $G(I,P;E)$ isostatic, if choosing any basis in that
replacement produces an (unpinned) isostatic graph.

A pinned graph $G(I,P;E)$ realized in the plane, with $\vek{P}$ for the pins, 
and $\vek{p}$ for all the vertices, is a {\em pinned framework}. A pinned framework is
{\em rigid} if the matrix $R_{\widetilde{G}}$ has rank $2|I|$, where $R_{\widetilde{G}}$
 is the rigidity matrix of $\widetilde{G}$
with the columns corresponding to the vertex set of $K_p$ removed,
{\em independent} if the rows of $R_{\widetilde{G}}$ corresponding to $E$ are  independent,
and {\em isostatic}, if it is rigid and independent. The vertices $I$ of a pinned
framework are in {\em generic position} if any submatrix of the rigidity matrix is zero only if it is identically
equal to zero with the coordinates of the inner vertices as variables.
The coordinates of the pins are prescribed
constants.

Figure~\ref{pinsub01} shows an example of a pinned isostatic $G$ and a corresponding basis 
$\ovtrian$ of $\widetilde{G}$.
\begin{figure}[htb]
\centering
\includegraphics{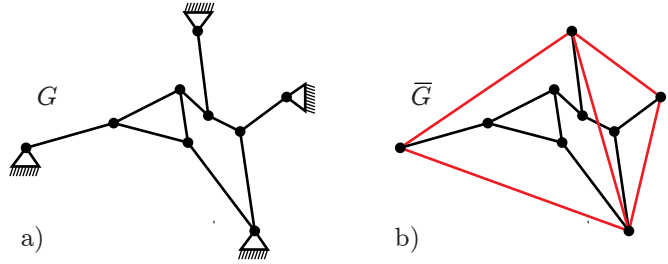}
\caption{Framework (a) is pinned isostatic because Framework (b) is isostatic.\label{pinsub01}}
\end{figure}

 It is common in engineering to choose pins
in advance and their placement $\vek{P}$ might not be generic,
in fact not even in {\em general position} (no three points collinear), as it is sometimes 
necessary to have all pins on a line. The following result shows that this is not a problem.

\begin{theorem}\label{PinThm}
Given a pinned graph $G(I,P;E)$, the following are equivalent:

(i) \ \ \ There exists an isostatic realization of $G$.

(ii) \ \ The Pinned Framework Conditions are satisfied.

(iii) \ For all placements $\vek{P}$ of $P$ with at least two distinct locations and all generic
positions of $I$ the resulting pinned framework is isostatic.
\end{theorem}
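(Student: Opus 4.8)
The plan is to establish the cycle (iii) $\Rightarrow$ (i) $\Rightarrow$ (ii) $\Rightarrow$ (iii); throughout we assume $|P|\ge 2$, as the Pinned Framework Conditions already force this (apply condition~2 or~3 to all of $E$). The implication (iii) $\Rightarrow$ (i) is immediate, since (iii) in particular provides an isostatic realization. For (i) $\Rightarrow$ (ii): from an isostatic realization, the rows of $R_{\widetilde{G}}$ indexed by $E$ are independent and of rank $2|I|$, so $|E|=2|I|$; independence of these rows at one configuration of $V$ forces independence at a \emph{generic} configuration, where the pins are also generic, so a basis $B$ of $K_{P}$ has independent rows in the rigidity matrix and a block-triangular argument shows $E\cup B$ to be independent in $\mathfrak{R}_2$. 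Applying Laman's Theorem~\ref{OverviewTheorem} to the subsets $E'\cup B[P']$ for $E'\subseteq E$, and using that the rigidity rank of $K_{P'}$ equals $2|P'|-3$ when $|P'|\ge 2$ and $0$ when $|P'|\le 1$, the Laman inequalities translate exactly into the three Pinned Framework Conditions.

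The substance is (ii) $\Rightarrow$ (iii), which I would prove by induction on $|I|$ via a Henneberg-type reduction for pinned graphs, in the spirit of Theorems~\ref{Henneberg} and~\ref{BJ} (the base case $|I|\le 1$ being routine: one inner vertex joined to two pins at distinct locations is isostatic for a generic inner position). One first shows that a pinned graph obeying the conditions has an inner vertex $v$ of degree $2$ or $3$: an inner vertex of degree $\le 1$ is impossible, since deleting it leaves a nonempty edge set $E'$ with $|E'|>2|I(E')|$, violating one of conditions 1--3; and if every inner vertex had degree $\ge 4$ then, since $\sum_{v\in I}\deg v=|E_{II}|+2|I|$ where $E_{II}$ is the set of inner--inner edges, all $2|I|$ edges would be inner--inner, contradicting condition~3 applied to $E_{II}$. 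If $\deg v=2$ with neighbours $a,b$, delete $v$; the resulting $G_{1}$ still obeys the conditions (its subgraphs are subgraphs of $G$) and satisfies $|E_{1}|=2|I_{1}|$, so induction applies. If $\deg v=3$ with neighbours $a,b,c$, then at most two of these are pins (three pinned neighbours would make the subgraph on $\{v,a,b,c\}$ violate condition~1), so some pair meeting $I$ is available, and an inverse edge-split --- deleting $v$ and inserting a suitably chosen admissible edge, say $ab$ --- yields a $G_{2}$ still obeying the conditions.

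To complete the induction it suffices to exhibit a \emph{single} isostatic realization of $G$ for the prescribed pins, since the rank condition defining isostaticity is Zariski-open in the coordinates of $I$ and therefore, once attained, holds at every generic position of $I$. In the degree-$2$ case, take an isostatic realization of $G_{1}$ (generic inner vertices, prescribed pins) and place $v$ off the line through $\vek{p}_{a}$ and $\vek{p}_{b}$; restricted to the two new columns the two new rows form the nonsingular matrix with rows $\vek{p}_{v}-\vek{p}_{a}$ and $\vek{p}_{v}-\vek{p}_{b}$, and since the old rows vanish on these columns the rank rises by exactly $2$, to $2|I|$. In the degree-$3$ case, place $v$ at a generic point of the line through $\vek{p}_{a}$ and $\vek{p}_{b}$: the pair $va,vb$ then reproduces the constraint of the edge $ab$ of $G_{2}$ together with one new constraint on $v$, while $vc$ supplies a further constraint on $v$ independent of the rest, so again the rank rises by exactly $2$.

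I expect the main obstacle to be the inverse edge-split: showing that among the (at least two) admissible pairs of neighbours of $v$, at least one insertion preserves all three Pinned Framework Conditions. This is the pinned analogue of the standard Laman edge-split argument, but it must be carried out for the three-tiered count function $E'\mapsto 2|I(E')|+\varepsilon(|P(E')|)$, where $\varepsilon$ equals $-3,-1,0$ according as $|P(E')|$ is $0$, $1$, or $\ge 2$; the key point is that this function is submodular, because $\varepsilon$ is concave and non-decreasing in $|P(E')|$, and this lets two ``blocking'' tight subgraphs be merged in the usual way to reach a contradiction. A secondary point, used throughout the lifting step, is that one needs $\vek{p}_{a}\neq\vek{p}_{b}$ for the relevant neighbours --- this is precisely where the hypothesis on the placement of $P$ is used --- together with the fact that this property, and the presence of at least two incident pins, is inherited by $G_{1}$ and $G_{2}$.
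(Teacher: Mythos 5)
Your proposal is correct and follows essentially the same route as the paper: $(i)\Leftrightarrow(ii)$ via Laman's Theorem applied to $E$ together with a maximal independent edge set on the pins, and $(ii)\Rightarrow(iii)$ by the same Henneberg-style reduction --- locate an inner vertex of degree $2$ or $3$ by the valence count, delete it or perform an inverse edge-split, and lift a realization back up, using openness of the rank condition to pass to all generic positions of $I$. The one step both arguments leave at the level of ``the usual arguments'' is the admissibility of the pinned inverse edge-split; you at least name it explicitly and sketch the submodularity argument that would close it, and your decision to work directly with the pinned rigidity matrix (base case $|I|\le 1$) spares you the paper's auxiliary isostatic scaffold $p_0,p_1,\dots$ on the pin set.
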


\begin{proof}
Let $\widetilde{G}= G\cup K_{P'}$
$P'\supseteqq P$, and $F$ a maximal independent edge set of $K_{P'}$.
Then by Theorem~\ref{OverviewTheorem} we deduce that $E\cup F$ is
isostatic if and only if the Pinned
Framework Conditions are satisfied, so $(i)\Leftrightarrow (ii)$

In order to show $(ii)\Rightarrow (iii)$ we first show that we can
extend $F$ to $F\cup E$ by a Henneberg sequence of 2-valent vertex
additions and edge-splits (see Figure~\ref{extendfig01}). To this
end we de-construct $\widetilde{G}$ first by removing inner vertices as
follows. Assume that $|I|>1$. Since $|E|=2|I|$ and $I$ spans at
most $2|I|-3$ edges, we must have at least three edges joining the
set of inner vertices $I$ to the pinned vertices $P$.  Therefore,
if we sum over the valence of inner vertices, and denote the the
set of edges with both endpoints in $I$ by $E_i$, the ones with
one endpoint in $I$ by $E_p$, we obtain $\sum val(i) = 2|E_i|
+|E_p| = 2|I|+|E_i| \leq 4|I|-3$.   So there will be some inner
vertices of valence 2 or 3.   If at this stage there is some
vertex of degree $2$, we can just remove it, to create a smaller
graph with the same isostatic count.   If there is some vertex of
degree $3$, then by the usual arguments~\cite{GSS, TW}, it can be
removed, and replaced by a new edge joining two of its neighbors,
which were not yet joined in a remaining rigid subgraph (e.g.\ not
both pinned), to create a smaller subgraph with the isostatic
count.  This produces a reverse sequence of smaller and smaller
isostatic graphs until we have no inner vertices.

   To obtain a realization, place $P$ in an arbitrary position $\vek{P}$
  with at least two distinct
vertices. Create an isostatic graph whose vertex set contains $P$, by, for example,
ordering the vertices in $P$ with distinct positions arbitrarily, $P=\{ p_1, p_2, \ldots \}$, adding edges between
consecutive vertices and attaching an extra vertex, ${p}_0$ by edges $(p_0,p_i)$. This graph
is clearly isostatic, provided the point $\vek{p}_0$ is not placed on the line through $\vek{p}_i$ and
$\vek{p}_{i+1}$ for any $i$,  because it has the correct edge count and is rigid since it consists
of a string of non-collinear triangles.

To complete the proof, we work back up the sequence of subgraphs we created in the de-
construction process.  We assume the current graph is
realized as isostatic.  When the next graph is created by adding a 2-valent vertex,
then adding such a vertex in any position except on the line joining its two
attachments gives a new isostatic realization.

When the next graph is created by an edge-split,  note that at
least one of the neighbors of the new vertex is inner, so this added inner vertex can
be placed in a generic position ensuring that the three new
attachments are not collinear.  Therefore, by the usual
arguments~\cite{GSS, TW}, this insertion is also isostatic when
placed along the line of the bar being removed, and therefore also
when placed in any generic position. Since (iii) trivially implies
(i), the proof is complete.
\end{proof}

A pinned graph $G(I, P;E)$ satisfying the Pinned Framework Conditions must have at least two
pins and in every isostatic realization of $G$ there must be at least two distinct pin locations.
Placing all pins in the same location never yields an isostatic framework, but we can make an
important observation about the DOF of such a ``pin collapsed'' framework.

\begin{theorem}\label{contractThm}
Let $G(I, P; E)$ be a pinned graph satisfying the Pinned Framework Conditions. Identifying the
pinned vertices to one vertex $p^{*}$ yields a graph $G^{*}(V,E)$, $V=I\bigcup \{p^{*}\}$ and 
the DOF of $G^{*}$
is one less than the number of rigidity circuits contained in $G^{*}$.
\end{theorem}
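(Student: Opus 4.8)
The plan is to translate the statement into a single counting identity, establish it using the Pinned Framework Conditions, and finish with a short matroid argument.

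First I would do the bookkeeping. After identifying $P$ to the single vertex $p^*$, the graph $G^*$ has $|V|=|I|+1$ vertices, and since every edge of $E$ has an endpoint in $I$, exactly $|E|=2|I|=2|V|-2$ edges. Let $\rho$ be the rank of a generic rigidity matrix of $G^*$ and $s=|E|-\rho$ the dimension of its space of self-stresses. The definition of generic DOF then gives
$$\mathrm{DOF}(G^*)=2|V|-3-\rho=2|V|-3-(2|V|-2-s)=s-1,$$
so the theorem is equivalent to: $s$ equals $m$, the number of rigidity circuits contained in $G^*$. I would also record the ``inherited'' sub-count: every subgraph $H\subseteq G^*$ satisfies $|E(H)|\le 2|V(H)|-2$. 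Pulling $H$ back to $G$: if $p^*\notin V(H)$ then $H\subseteq G[I]$ and Condition~3 gives even $|E(H)|\le 2|V(H)|-3$; if $p^*\in V(H)$ then the corresponding subgraph of $G$ has inner-vertex set $V(H)\setminus\{p^*\}$ and at least one pin, so Conditions~1--2 give $|E(H)|\le 2(|V(H)|-1)$.

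The heart of the proof is the claim that distinct rigidity circuits in $G^*$ are edge-disjoint. Suppose $C_1\ne C_2$ are rigidity circuits with $k:=|C_1\cap C_2|\ge 1$, and put $c:=|V(C_1)\cap V(C_2)|$. Since $C_1\cap C_2$ is a nonempty proper subset of the circuit $C_1$, it is independent in $\mathfrak{R}_2$, hence satisfies Laman's inequality: $k\le 2|V(C_1\cap C_2)|-3\le 2c-3$. On the other hand, applying the sub-count to $H=C_1\cup C_2$, using the circuit counts $|C_i|=2|V(C_i)|-2$, and expanding both $|E(C_1\cup C_2)|$ and $|V(C_1\cup C_2)|$ by inclusion--exclusion, the inequality $|C_1\cup C_2|\le 2|V(C_1\cup C_2)|-2$ reduces to $2c\le k+2$. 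Combining the two bounds gives $2c\le(2c-3)+2=2c-1$, a contradiction. I expect this step --- choosing the right test subgraph and playing the circuit count against Laman's bound on the overlap --- to be the main obstacle; the rest is routine.

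Finally I would conclude by matroid theory applied to $M:=\mathfrak{R}_2$ restricted to $E(G^*)$. Let $C_1,\dots,C_m$ be the rigidity circuits contained in $G^*$; by the previous step they are pairwise edge-disjoint, so the relation ``belong to a common circuit'' on $E(G^*)$ is already transitive, and its classes are exactly the sets $C_1,\dots,C_m$ together with the singletons $\{e\}$ for the coloops of $M$ (edges lying in no circuit). Hence $M$ is the direct sum of the circuit matroids on $C_1,\dots,C_m$ and a free matroid on the coloops. A circuit matroid has nullity $1$ and a free matroid has nullity $0$, so $s=\sum_{i=1}^m 1=m$, and therefore $\mathrm{DOF}(G^*)=s-1=m-1$. (If some inner vertex of $G$ is joined to two distinct pins, $G^*$ carries a double edge, i.e.\ a $2$-element rigidity circuit on two vertices; the argument is unaffected, as it is carried out at the level of the rigidity matrix of $G^*$, where a double edge is simply a pair of equal rows.)
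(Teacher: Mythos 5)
Your proof is correct and follows essentially the same route as the paper's: both reduce the statement to showing that distinct rigidity circuits in $G^{*}$ are essentially disjoint by playing the circuit count of $C_1\cup C_2$ against the subgraph counts inherited from the Pinned Framework Conditions, and then conclude by removing one edge per circuit (equivalently, computing the nullity as the number of matroid components that are circuits). Your write-up is more careful than the paper's terse argument --- the paper asserts that the circuits meet only at $p^{*}$ and that deleting one edge from each yields a basis without spelling out the matroid bookkeeping --- but the underlying idea is the same.
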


\begin{proof}
Since $|E|=2|I|=2|V|-2$, $G^{*}$ contains too many edges to be isostatic. If $G^{*}$ is rigid, it is overbraced
by exactly one edge, hence contains exactly one rigidity circuit. If $G_p$ is not rigid, each of
the rigidity circuits in $G^{*}$ must contain $p^{*}$. If two rigidity circuits intersected in a vertex
other than $p^{*}$, the union of their edge sets, together with the pinned subgraph would violate the
Pinned Subgraph Conditions. Therefore all circuits in $G^{*}$ have exactly the vertex $p$ in common.
Removing exactly one edge from each circuit yields a basis for $G^{*}$ in $\mathfrak{R}(G^{*})$, 
establishing
the desired connection between the DOF and the number of circuits.
\end{proof}

%%%%%%%%%%%%%%%%%%%%%%%%%%
\section {Characterizations of Assur graphs}
%%%%%%%%%%%%%%%%%%%%%%%%%%%
We start with two citations from the mechanical engineering literature as motivation for our combinatorial
conditions.
The following definition appears in~\cite{YV}: ``An Assur group is obtained from a kinematic
chain of zero mobility by suppressing one or more links, at
the condition that there is no simpler group inside". In~\cite{SS} we find:
``An element of an Assur group is a kinematic chain with free or unpaired joints
on the links which when connected to a stationary link will have zero DOF.
A basic rigid chain is a chain of zero DOF and whose subchains all have
DOF greater than zero. In other words an element of an Assur group is a basic rigid
chain with one of its links deleted".

These descriptions  from the engineering literature are not definitions in the mathematical sense,
but rather use `minimality' informally, as in the original work of Assur.
We are now ready to give a formal definition by confirming  a series of equivalent combinatorial
characterizations of Assur graphs.
These statements are new, and (iii) and (iv) come from the conjectures offered by
Offer Shai at the workshop.

%%%%%%%%%%%%%%%%%%%%%%%%%%%%%
\subsection{Basic Characterization of Assur graphs}
%%%%%%%%%%%%%%%%%%%%%%%%%%%%%

\begin {theorem}\label{CharacterThm}
Assume $G= (I,P; E)$ is a pinned isostatic graph.  Then the following are equivalent:

(i) $G= (I,P; E)$  is minimal as a pinned isostatic graph:  that is for all
proper subsets of vertices  $I'\cup P'$,  $I' \cup P'$ induces a pinned subgraph
$G' = (I'\cup P',  E')$ with  $|E'| \leq  2|I'| -1$.

(ii)     If the set $P$ is contracted to a single vertex $p^*$, inducing the unpinned
graph $G^*$ with edge set $E$, then $G^*$ is a rigidity circuit.

(iii)   Either the graph has a single inner vertex of degree~$2$ or each time we
delete a vertex, the resulting pinned graph has a motion of all inner vertices
(in generic position).

(iv)  Deletion of any edge from $G$ results in a pinned graph that has a motion
of all inner vertices
(in generic position).
\end {theorem}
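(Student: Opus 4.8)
The plan is to prove the four statements equivalent by working through the cycle $(ii)\Rightarrow(i)\Rightarrow(iv)\Rightarrow(iii)\Rightarrow(ii)$, using the already-established Theorems~\ref{PinThm} and~\ref{contractThm} as the main engine, together with Laman's theorem and the standard counting arguments. The first observation is that since $G$ is assumed pinned isostatic we have $|E|=2|I|$, and contracting $P$ to $p^*$ yields $G^*$ on $|I|+1$ vertices with $|E|=2|I|=2|V(G^*)|-2$ edges, the correct count for a rigidity circuit; so the content in each case is about the \emph{subgraph} conditions.

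For $(ii)\Rightarrow(i)$: if $G^*$ is a rigidity circuit, then every proper subset of its edges is independent in $\mathfrak{R}_2$, hence satisfies Laman's inequality $|F|\le 2|V(F)|-3$. Translating back to the pinned picture, a proper vertex subset $I'\cup P'$ of $G$ with edge set $E'$ corresponds (after contraction) to an edge set in $G^*$ on at most $|I'|+1$ vertices, and unless $E'=E$ this is a proper subset of the circuit, giving $|E'|\le 2(|I'|+1)-3=2|I'|-1$; one has to check the boundary case where $P'$ is empty or a single vertex separately, but there the bound only gets stronger. For $(i)\Rightarrow(iv)$: deleting an edge $e$ from a pinned isostatic $G$ drops the edge count to $2|I|-1$, so $\widetilde{G}-e$ is no longer rigid and the inner vertices admit a nontrivial infinitesimal motion; minimality (i) is exactly what guarantees there is no proper rigid sub-piece forcing some inner vertices to stay fixed, so the motion is nonzero on \emph{all} inner vertices — this is where I expect to invoke ``the usual arguments'' about independence and the structure of the rigidity matroid on a generic configuration, using Theorem~\ref{PinThm}(iii) to pass freely between combinatorics and a generic realization.

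For $(iv)\Rightarrow(iii)$: if $G$ has a single inner vertex it must have degree $2$ (from $|E|=2|I|=2$) and we are in the first alternative; otherwise, deleting a vertex $v$ removes $\mathrm{val}(v)\ge 2$ edges, and since (iv) already tells us every single-edge deletion frees all inner vertices, a fortiori removing two or more edges (all incident to $v$) does, and one checks the remaining inner vertices all move. For $(iii)\Rightarrow(ii)$: this is the place to use Theorem~\ref{contractThm}: $G^*$ has $|E|=2|V|-2$, so it is either rigid-plus-one-circuit or a union of circuits all sharing only $p^*$; condition (iii) says $G$ cannot be decomposed so that some proper pinned sub-piece is already isostatic (which would let us delete a vertex and keep part of the structure rigid), and this forces $G^*$ to contain exactly one circuit on all of $V$, i.e.\ $G^*$ itself is a circuit.

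The main obstacle, I expect, is the careful bookkeeping in translating the three flavors of Pinned Framework Conditions (the cases $|P'|\ge 2$, $|P'|=1$, $P'=\emptyset$) back and forth across the contraction $P\mapsto p^*$ — in particular making sure that when a proper subset $I'\cup P'$ has $|P'|\ge 2$ the contracted image genuinely sits inside $G^*$ as a subgraph on $\le|I'|+1$ vertices that is \emph{proper}, and separately handling subsets that contain all of $P$ but miss an inner vertex. A secondary subtlety is phrasing (iii) and (iv): ``a motion of all inner vertices'' must be read as a single infinitesimal motion that is nonzero at every inner vertex, and one should note (via Theorem~\ref{PinThm}) that because the configuration is generic, the existence of such a motion is a purely combinatorial condition equivalent to every inner vertex lying in the ``flexible part'' of the underlying matroid, which is exactly what minimality in (i) delivers.
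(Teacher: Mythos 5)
Your proposal is correct, and three of its four implications are in substance the arguments the paper itself uses: the count-plus-``non-moving vertices span a proper pinned isostatic subgraph'' argument for (i)$\Rightarrow$(iv), and the restriction-of-a-motion (``a fortiori'') argument for (iv)$\Rightarrow$(iii) are exactly the paper's steps, stated at the same level of rigor (both you and the paper defer the fact that the fixed set of a generic one-degree-of-freedom motion induces a rigid, hence isostatic, pinned subgraph to ``the usual arguments''). Where you genuinely diverge is in how condition (ii) is wired into the cycle: the paper closes the loop with a direct (iii)$\Rightarrow$(i) (a proper pinned isostatic subgraph survives the deletion of any vertex outside it, so its inner vertices cannot all move) and then proves (i)$\Leftrightarrow$(ii) separately by a short contraction/splitting argument, whereas you prove (ii)$\Rightarrow$(i) explicitly via Laman's inequality on proper subsets of the circuit and (iii)$\Rightarrow$(ii) by invoking Theorem~\ref{contractThm} to see $G^*$ as circuits meeting only in $p^*$ and ruling out more than one. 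Your (ii)$\Rightarrow$(i) is actually more careful than the paper's one-line converse, and your bookkeeping of the contraction (a proper vertex subset lands on at most $|I'|+1$ vertices of $G^*$) is the right point to worry about; the cost of your (iii)$\Rightarrow$(ii) is that it leans on the heavier structural statement of Theorem~\ref{contractThm}, while the paper's (iii)$\Rightarrow$(i) needs only the existence of a single proper isostatic piece. One small caveat for both you and the paper: when an inner vertex is adjacent to two pins, contraction produces a doubled edge, so (ii) must be read in the multigraph rigidity matroid (the dyad contracts to a double edge, which is indeed a circuit); your Laman-based (ii)$\Rightarrow$(i) silently assumes this is handled, but it is consistent with minimality, so no actual gap results.
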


\begin{proof}
(i) implies (iv)   If we delete an edge, there must be a
motion by the count. If there is a set of inner vertices that are not moving,
    in generic position, then these vertices, and their edges to the pinned vertices,
    must form a proper isostatic pinned subgraph contradicting condition (i).

    (iv) implies (iii) Removing an edge with an endpoint of valence $2$ produces a graph
    with a pendant edge. This must be the only inner vertex, since any other inner vertices
    are not moving, contradicting (iv). Since removing a single edge results in a motion of
    all inner vertices, removing all edges incident with one particular vertex results in a framework
    with a motion on all the remaining vertices.

% in generic position, and there
%is no motion at all, then the remaining count is $|E^{*}| =2|I^{*}|=2(|I|-1)= 2|I|-2 $ and the deleted
%vertex had degree 2.
%This will identify a residual subgraph with $|E^{*}| =2|I^{*}| -2$,
%contradicting the minimality in (i).%

%    Therefore we can assume we have removed a vertex of degree $> 2$ and there is a
%    non-zero motion, since $|E^{*}| \leq 2|I|-3= 2|I^{*}|$.
%    If there is a set of inner vertices that are not moving,
%    in generic position, then these vertices, and their edges to the pinned vertices,
%    must form a proper isostatic pinned subgraph, also contradicting condition (i)

  Conversely, (iii) implies (i)  If the graph contains a minimal proper pinned
  subgraph, then removing any vertex outside of this subgraph will produce a motion
  at most in the vertices outside of the subgraph.  This contradicts (iii).

%(i) implies (iv)   If we delete an edge there
%must be a motion since there are not enough edges to achieve rigidity and, as before, all inner
%vertices move by minimality.
%Conversely (iv) implies (i)  If the graph contains a minimal proper pinned
%  subgraph, then removing any edge not contained in this subgraph will produce a motion
%  at most in the vertices outside of the subgraph.  This contradicts (iv).

(i) is equivalent to (ii)  If $G= (V,P; E)$ is a pinned isostatic graph, then identifying
the vertices in $P$ to a single vertex $p*$ yields a graph $G^{*} = (V^{*}, E^{*})$ with
$|E^{*}| = 2(|V|+1) = |V^{*}|-2$, so $G^{*}$ is dependent and if the minimality condition in (i) is
satisfied, it must be a rigidity circuit. Conversely, if $G^{*}$ is a rigidity circuit, we can
pick an arbitrary vertex of $G^{*}$ and call it $p^{*}$. Splitting $p^{*}$ into a vertex set $P$,
$|P|\geq 2$, (where $P$ may have as many vertices as
the valence of $p^{*}$ in $G^{*}$ allows) and specifying for each edge with endpoint $p^{*}$ a
new endpoint from $P$ so that no isolated vertices are left, yields an isostatic pinned framework
satisfying the minimality condition.
\end{proof}

This theorem provides a rigorous mathematical definition: an {\em Assur graph}
is a pinned graph satisfying one of the four equivalent conditions in Theorem~\ref{CharacterThm}.

Condition (i) is
a refinement of the Gr\"{u}bler count~(\ref{Grubler1}), in a form which is  now necessary and sufficient.

Condition (ii) translates the minimality
condition to minimal dependence in $\mathfrak{R}_2 (n)$ and thus serves as a purely combinatorial
description of Assur graphs and may be checked by fast
algorithms~\cite{JacobsHendrickson, pebblegame}.

Conditions (iii) and (iv) are similar in nature. Condition (iii)
provides the engineer with a quicker check for the Assur property
for smaller graphs than (iv), since there are fewer vertices than
edges to delete. However,  condition (iv) tells the engineer that
a driver inserted for an arbitrary edge will (generically) move
all inner vertices.  We will expand on this property in~\cite{SSW2}
\begin{figure}[htb]
\centering
\includegraphics{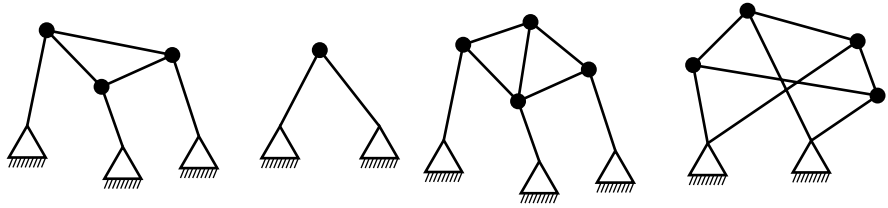}
\caption{Assur graphs\label{assurwordfig01}}
\end{figure}
\begin{figure}[htb]
\centering
\includegraphics{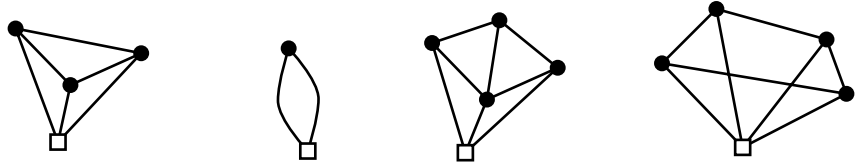}
\caption{Corresponding circuits for Assur graphs\label{assurwordfig03}}
\end{figure}

Some examples of Assur graphs are drawn in Figure~\ref{assurwordfig01} and their corresponding rigidity
circuits in Figure~\ref{assurwordfig03}.

%%%%%%%%%%%%%%%%%%%%%%%%%
\subsection{Decomposition of general
isostatic frameworks }
%%%%%%%%%%%%%%%%%%%%%%%%

We  now show that a general isostatic framework can be decomposed into a
partially ordered set of Assur graphs. The given framework can be re-assembled from
these pieces by a basic linkage composition.
Figure~\ref{assurwordfig02} shows isostatic pinned frameworks and
 Figure~\ref{assurwordfig05}
indicates their decomposition.
\begin{figure}[htb]
\centering
\includegraphics{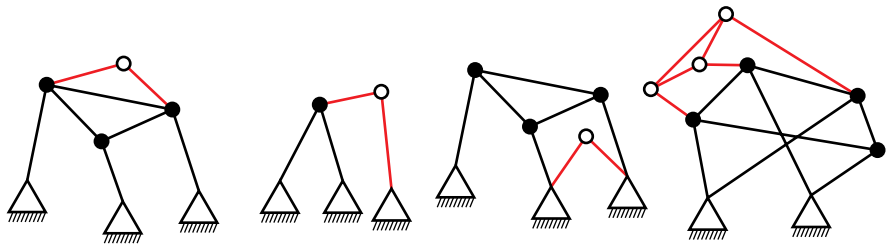}
\caption{Decomposable - not Assur graphs\label{assurwordfig02}}
\end{figure}

\begin{figure}[htb]
\centering
\includegraphics{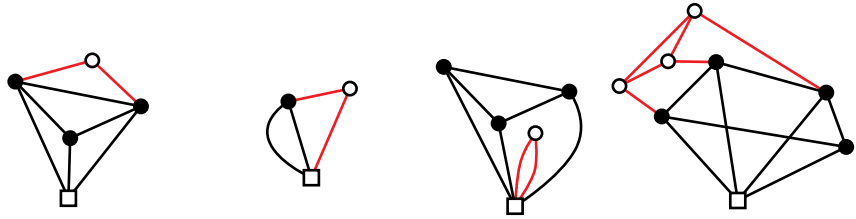}
\caption{The first step of a decomposition for isostatic
frameworks in \ref{assurwordfig02} - with identified
subcircuit(s).\label{assurwordfig04}}
\end{figure}
%Figure 4. Decomposing pinned Isostatic Graph into Assur components.

Given two linkages as pinned  frameworks $H= (W,Q;F)$ and $G= (V,P; E)$ and an injective map
$C: Q \rightarrow V\cup P$, the
{\em linkage composition} $C(H,G)$ is the linkage obtained from $H$  and $W$ by identifying
the pins $Q$ of $H$ with their images $C(Q)$.

\begin{lemma} Given two pinned isostatic graphs $H= (W,Q,F)$ , $G= (V,P; E)$,
the composition $C(H,G)$ creates the new composite pinned graph:
$C=(V\cup W, P, E\cup F)$ which is also isostatic.
\end{lemma}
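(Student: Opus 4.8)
The plan is to exhibit a single isostatic realization of the composite pinned graph $C=(V\cup W,P;E\cup F)$ and then invoke Theorem~\ref{PinThm} to conclude that $C$ satisfies the Pinned Framework Conditions, hence is pinned isostatic. First I would record the routine bookkeeping. The inner vertex sets $V$ and $W$ are disjoint, since the inner vertices of $H$ are genuinely new; likewise $V,W,P$ are pairwise disjoint. No edge of $F$ can coincide with an edge of $E$: every edge of $F$ has an endpoint in $W$, while every edge of $E$ has both endpoints in $V\cup P$, and $W\cap(V\cup P)=\emptyset$. Since $G$ and $H$ are pinned isostatic we have $|E|=2|V|$ and $|F|=2|W|$, so $|E\cup F|=2|V|+2|W|=2|V\cup W|$, i.e.\ the composite has the exact edge count required.

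Next I would build the realization in two stages. Both $G$ and $H$ have at least two pins (as noted after Theorem~\ref{PinThm}), so Theorem~\ref{PinThm}(iii) applies to each. Choose a realization of $G$ with the pins $P$ in general position and the inner vertices $V$ in generic position over the field generated by the pin coordinates, so that all vertices of $G$ occupy distinct points; then this realization of $G$ is isostatic. The injective map $C$ sends the pins $Q$ of $H$ to $|Q|\ge 2$ distinct vertices of $G$, hence to $|Q|$ distinct points; assigning to each $q\in Q$ the position of $C(q)$ is therefore a placement of $Q$ with at least two distinct locations. Now realize $H$ by putting its pins $Q$ at those points and its inner vertices $W$ generically over the field generated by the coordinates already chosen; by Theorem~\ref{PinThm}(iii) this realization of $H$ is isostatic. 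Overlaying the two gives a well-defined realization of $C$: $V$ and $P$ keep their positions from the realization of $G$, $W$ takes its positions from the realization of $H$, and each image vertex in $C(Q)\subseteq V\cup P$ carries one and the same position on both sides, so the edge positions are consistent.

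The key step is to check this realization of $C$ is pinned isostatic. Let $u$ be an infinitesimal motion of the composite pinned framework with the pins $P$ held at rest. Restricting $u$ to the vertex set $V\cup P$ and to the edges $E$ yields an infinitesimal motion of the isostatic pinned framework $G$, so $u$ vanishes on all of $V$; in particular $u$ vanishes on $C(Q)\subseteq V\cup P$. Restricting $u$ to the vertex set $W\cup C(Q)$ and to the edges $F$ then yields an infinitesimal motion of the pinned framework $H$, whose pins $Q\cong C(Q)$ are now at rest, so $u$ vanishes on $W$ as well. Hence $u\equiv 0$, so the rigidity matrix of $\widetilde{C}$ with the pin columns deleted has rank $2|V\cup W|$; together with $|E\cup F|=2|V\cup W|$ this makes the realization isostatic. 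By Theorem~\ref{PinThm}, $C$ is pinned isostatic.

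\textbf{Main obstacle.} The delicate point — and the reason one invokes Theorem~\ref{PinThm}(iii) rather than a naive genericity argument — is that the pins of $H$ are forced onto the positions dictated by the realization of $G$, which need not be generic or even in general position, and moreover some of them are identified with \emph{inner} vertices of $G$ rather than with pins; Theorem~\ref{PinThm}(iii) is exactly the tool that keeps $H$ isostatic under such a placement, provided at least two pin positions are distinct. A purely combinatorial alternative also works: verify the Pinned Framework Conditions for each nonempty subgraph $C'$ of $C$ by splitting its edges into the $E$-part and the $F$-part, applying the corresponding conditions for $G$ and for $H$ separately, and adding the bounds; this requires a somewhat finer case analysis according to how many pins each part uses and whether a vertex shared between the two parts is counted as inner or as pinned on each side.
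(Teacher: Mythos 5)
Your proof is correct, but it takes a genuinely different route from the paper's. The paper's own proof is purely combinatorial and very terse: it checks the global count $|E\cup F|=2|V\cup W|$ and then asserts that ``a similar analysis of the subgraphs confirms the isostatic status,'' i.e.\ it verifies the Pinned Framework Conditions (Theorem~\ref{PinThm}(ii)) for every subgraph of the composite by splitting each subgraph into its $E$-part and $F$-part and adding the bounds --- exactly the ``purely combinatorial alternative'' you sketch in your final paragraph, with the fiddly case analysis (a vertex of $C(Q)\cap V$ counted as inner on the $G$-side but as a pin on the $H$-side) left to the reader. You instead go through Theorem~\ref{PinThm}(i)/(iii): you build one explicit realization by first realizing $G$ isostatically, then placing the pins $Q$ of $H$ at the (at least two distinct) images $C(Q)$ and choosing $W$ generically over everything already placed, and you kill any infinitesimal motion in two stages --- first on $V$ using the rigidity of pinned $G$, then on $W$ using the rigidity of pinned $H$ with its pins at rest. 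This is sound: your use of Theorem~\ref{PinThm}(iii) is precisely what licenses placing $H$'s pins at non-generic, partly inner-vertex positions, and rank $2|V\cup W|$ together with the exact edge count does give isostatic. The trade-off is that your argument leans on the realization theorem and a genericity bookkeeping step, while the paper's argument stays at the level of counting; yours has the advantage of making the ``layered'' rigidity mechanism of the composition explicit, which is the same two-stage motion argument that underlies the decomposition and driver analysis later in the paper.
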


\begin{proof}  By the counts, we have $|F|=2|W|$, and $|E|=2|V|$, so
$|E\cup F| = 2|W \cup V|$.   A similar analysis of the subgraphs confirms the isostatic status.
\end{proof}

\begin{figure}[htb]
\centering
\includegraphics{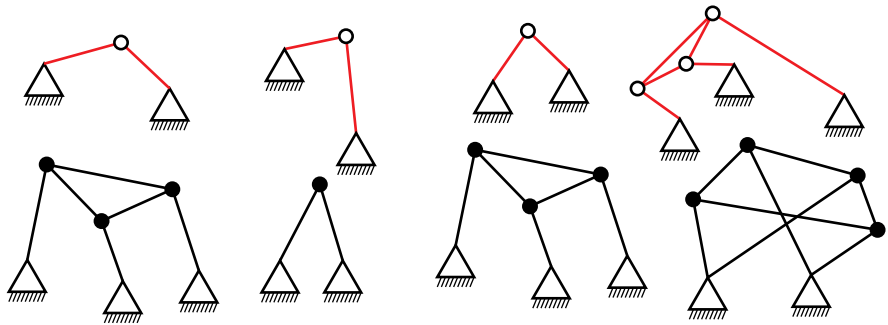}
\caption{Recomposing the pinned isostatic graphs in Figure~\ref{assurwordfig02}
from their Assur components.\label{assurwordfig05}}
\end{figure}

Under this operation, the Assur graphs will be the minimal, indecomposable graphs.  We
can show that every pinned isostatic graph $G$ is a unique composition of Assur graphs, which we will
call the {\em Assur components} of $G$.

\begin{theorem} A pinned graph is isostatic if and only if it decomposes \label{POTheorem}
into Assur components. The decomposition into Assur components is unique.
\end{theorem}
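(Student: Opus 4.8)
The plan is to prove the two implications separately, treating the forward implication and uniqueness together by induction on $|I|$. The reverse implication (decomposition $\Rightarrow$ isostatic) follows directly from the preceding Lemma: list the Assur components $A_1,\dots,A_k$ in an order extending the partial order, so that the pins of each $A_j$ lie in $P$ together with the inner vertices of $A_1,\dots,A_{j-1}$; starting from $A_1$ and composing the $A_j$ one at a time, each $A_j$ is pinned isostatic because it is Assur, and the graph assembled so far is pinned isostatic by the Lemma, so $G$ is pinned isostatic.

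For the forward implication the key object is the family $\mathcal F$ of Assur pinned subgraphs $A=(I_A,P_A;E_A)$ of $G$ with $P_A\subseteq P$. First I would show $\mathcal F\neq\emptyset$ whenever $I\neq\emptyset$: if $G$ is Assur it lies in $\mathcal F$; otherwise contract $P$ to a point $p^{*}$ to form $G^{*}$, which is dependent since $|E^{*}|=2|V^{*}|-2$, while the inner part $G[I]$ is independent (the $P'=\emptyset$ case of the Pinned Framework Conditions says it has at most $2|I'|-3$ edges on any $I'$), so every rigidity circuit of $G^{*}$ uses $p^{*}$; distributing the edges at $p^{*}$ in such a circuit back to their original endpoints in $P$ gives a proper $A\in\mathcal F$ with $|P_A|\geq 2$ (one pin would violate condition~2 in $G$), which is Assur by Theorem~\ref{CharacterThm}(ii). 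Next I would record the structural facts: each $A\in\mathcal F$ is induced on $I_A\cup P_A$ with $N_G(I_A)\cap P=P_A$ (forced by $|E_A|=2|I_A|$ and condition~1 of the Pinned Framework Conditions in $G$), and distinct inclusion-minimal members of $\mathcal F$ are inner-vertex-disjoint, hence edge-disjoint — if two shared an inner vertex, the intersection of their edge sets would be a proper pinned isostatic subgraph of an Assur graph, contradicting its minimality.

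Now peel off all inclusion-minimal members $A^{(1)},\dots,A^{(m)}$ of $\mathcal F$ at once, reclassifying the vertices $\bigcup_t I^{(t)}$ as pins, to form $G''=\bigl(I\setminus\bigcup_t I^{(t)},\,R,\,E\setminus\bigcup_t E^{(t)}\bigr)$. The main technical step is that $G''$ is again pinned isostatic: the edge count is immediate, and for a subgraph $(J,R',F)$ of $G''$ condition~1 follows by re-adjoining the sets $E^{(t)}$ meeting $J$ and applying condition~1 in $G$; the delicate cases are subgraphs of $G''$ with at most one pin, that pin being a formerly-inner vertex, which are subgraphs of $G$ with no pin, so the $P'=\emptyset$ bound in $G$ supplies the strict inequality needed for conditions~2 and~3 in $G''$. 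Since $|I''|<|I|$, induction gives a decomposition of $G''$, and prepending $A^{(1)},\dots,A^{(m)}$ as new minimal components decomposes $G$.

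For uniqueness, in any decomposition of $G$ a minimal component has all its pins in $P$ (nothing lies below it), so it belongs to $\mathcal F$, and it cannot inner-vertex-properly contain another member of $\mathcal F$, since that member would be a proper pinned isostatic subgraph of an Assur graph, violating condition~(i); hence the minimal components of any decomposition are exactly the inclusion-minimal members of $\mathcal F$, a set intrinsic to $G$. Removing them is therefore a canonical operation, so by induction the whole decomposition, together with its partial order ($A_i\prec A_j$ when an inner vertex of $A_i$ is a pin of $A_j$), is unique. I expect the main obstacle to be the verification that peeling off a minimal Assur subgraph preserves the Pinned Framework Conditions once formerly-inner vertices become pins, and, secondarily, the bookkeeping showing the minimal members of $\mathcal F$ can be removed simultaneously without interference.
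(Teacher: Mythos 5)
Your proof follows the same strategy as the paper's own (which is only a five-line sketch): contract the pins, identify the minimal rigidity circuits through $p^*$ --- equivalently the inclusion-minimal Assur subgraphs pinned into $P$ --- peel them off as the first layer, re-pin their inner vertices, and iterate. You supply the verifications the paper leaves implicit (non-emptiness of $\mathcal F$, inner-vertex- and edge-disjointness of its minimal members, preservation of the Pinned Framework Conditions after peeling, and the fact that no stray edges are left between newly-pinned vertices), and these are correct.

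The one genuine soft spot is in the uniqueness argument. You prove that every minimal component of an arbitrary decomposition is an inclusion-minimal member of $\mathcal F$, but then assert that the minimal components are \emph{exactly} the inclusion-minimal members of $\mathcal F$ without establishing the reverse containment, namely that every inclusion-minimal $A\in\mathcal F$ actually occurs as a component. Without this, a decomposition could a priori defer such an $A$ to a later layer by splitting its edges among higher components, and uniqueness would not follow from what you have written. The missing containment is true and provable with tools you already use: take the component $B$ lowest in the partial order among those containing an edge of $A$, let $H$ be the union of $B$ with all components below it (pinned isostatic with pins in $P$ by the composition Lemma), apply the submodular count to $A$ and $H$ to get $E_A\subseteq E_H$ and hence $E_A\subseteq E_B$ by the choice of $B$, then deduce $I_A\subseteq I_B$ from condition~1 for $B$, so that $A$ is a pinned isostatic subgraph of the Assur graph $B$ and minimality forces $A=B$. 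With that step added the argument is complete --- and considerably more rigorous than the paper's own treatment, which asserts uniqueness without proof.
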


\begin{proof}  Take the isostatic pinned framework, and identify the ground pins.
This is now a dependent graph.  Using properties of $\mathfrak{R}_2 (n)$, see~\cite{CMW}, we can 
identify minimal dependent subgraphs - which, by
Theorems~\ref{contractThm} and~\ref{CharacterThm}, 
are Assur components after the pins are separated. These are the initial components. When all
of these initial components are contracted in step two, we seek additional Assur components.
We iterate the process until only the ground is left.
\end{proof}

The decomposition process described in the proof of
Theorem~\ref{POTheorem} naturally induces a partial order on the
Assur components of an isostatic graph:  component $A\leq B$ if
$B$ occurs at a higher level, and $B$ has at least one vertex of
$A$ as a pinned vertex. The algorithm for decomposing the graph
guarantees that $A\leq B$ means that $B$ occurs at a later stage
than $A$. This partial order can be represented in an {\em Assur
scheme} as in Figure~\ref{assurscheme}.  This partial order, with
the identifications needed for linkage composition,  can be used
to re-assemble the graph from its Assur components.
\begin{figure}[htb]
\centering
\includegraphics{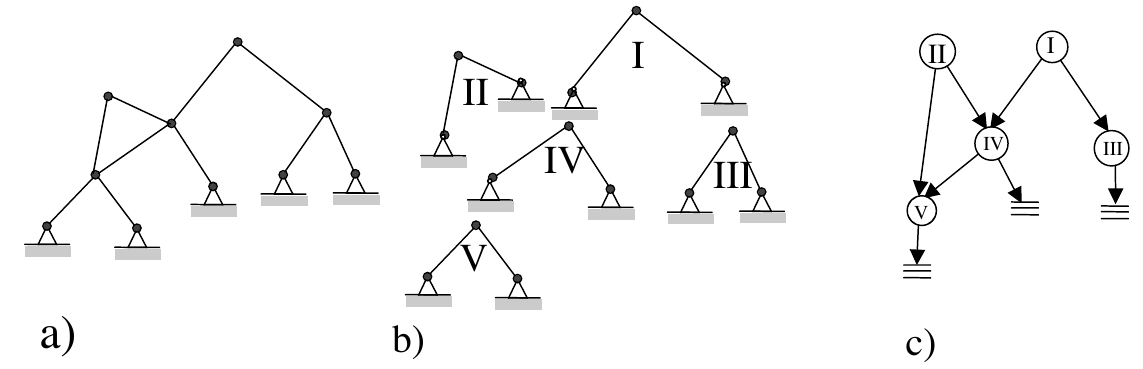}
\caption{An isostatics pinned framework a) has a unique decomposition into Assur graphs 
b) which is represented by a partial order or Assur scheme c).\
\label{assurscheme}}
\end{figure}

Replacing any edge in an isostatic framework produces a 1~DOF
linkage. The decomposition described in Theorem~\ref{POTheorem}
permits the analysis of this linkage in layers.  In fact, we can
place drivers in each Assur component to obtain linkages with
several degrees of freedom and their complex behavior can be
simply described by analyzing the individual Assur components.
This process of adding drivers is studied in more detail
in~\cite{SSW2}.

%In Figure~\ref{tracfig02b} link~1 is
%a driving link which is replaced by a pin, $A$, in Figure~\ref{tracfig04}.
%From the analysis above, all isostatic pinned graphs give a generically
%1DOF mechanism, when a pin
%is replaced by a driver.  Conversely,  all 1DOF, independent mechanisms
%(generic) give an isostatic
%framework when the driver is replaced by a pin.

%%%%%%%%%%%%%%%%%%%%%%%%
\subsection{ Generating Assur graphs}
%%%%%%%%%%%%%%%%%%%%%%%%%%
We summarize inductive techniques to generate all Assur graphs.  
Engineers find such techniques of interest to
generate basic building blocks for synthesizing new linkages.
\begin{figure}[htb]
\centering
\includegraphics{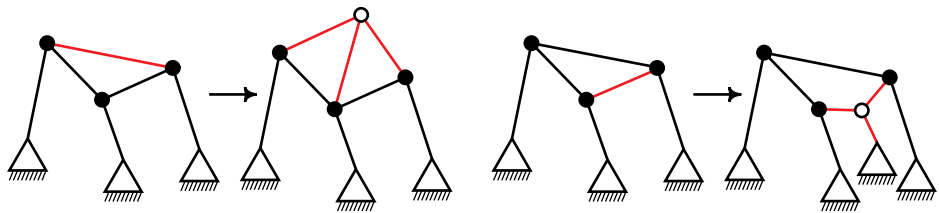}
\caption{An Edge-Split takes an Assur graph with at least four vertices to an Assur graph\
\label{assurwordfig06}}
\end{figure}
%%%%%%%%%%
\begin{figure}[htb]
\centering
\includegraphics{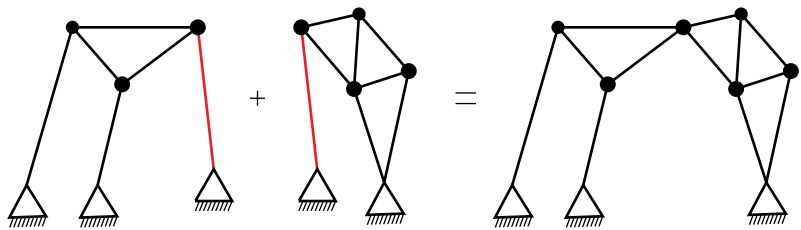}
\caption{2-sum of Assur graphs gives a new Assur graph with a removed pin.\label{palo01Fig}}
\end{figure}

The dyad is the only Assur graph on three vertices. There is no Assur graph on four
vertices. An Assur graph, whose corresponding rigidity circuit is $K_4$ is called a {\em basic} Assur graph.

To generate all Assur graphs (on five or more vertices) we use Theorem~\ref{CharacterThm}(ii) together
with Theorem~\ref{BJ} to generate all rigidity circuits. To get from a rigidity circuit $C$ to an Assur graph,
we choose a vertex $p^*$ of $C$ and split it into two or more pins (as in the proof of
Theorem~\ref{CharacterThm}).
The choice of $p^*$, the splitting of $p^*$ into a set $P$ of pins ($2\leq |P|\leq val(p^*)$), and choosing
for each edge incident to $p^*$ an endpoint from $P$ allows us to construct several Assur graphs from one
rigidity circuit, see Figure~\ref{PinRearrange}. We say that $G(I, P; E)$ and $G'(I, P', E)$ are
related by {\em pin rearrangement} if $G^* =G'^*$ (see Figure~\ref{PinRearrange}).

\begin{figure}[htb]
\centering
\includegraphics{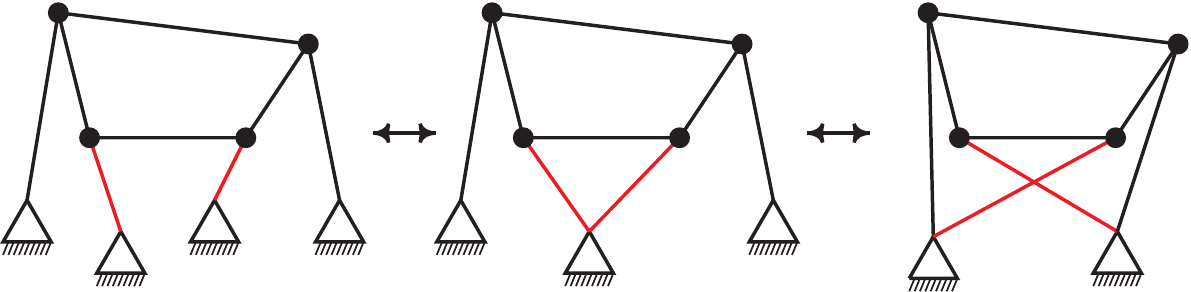}
\caption{Pin rearrangement (maintaining fact of at least two pins) \label{PinRearrange}}
\end{figure}

The operations of edge-split and 2-sum, which were used to generate rigidity circuits inductively,
can also be used directly on Assur graphs to generate new Assur graphs from old, see
Figures~\ref{assurwordfig06} and
~\ref{palo01Fig}.  In particular, the operation of 2-sum may be of practical value if, for space reasons for example,
a pinned vertex is to be eliminated, see Figure~\ref{palo01Fig}.

\begin{theorem}All Assur graphs on 5 or more vertices can be obtained from basic Assur graphs by
a sequence of edge-splits, pin-rearrangements and $2$-sums of smaller Assur graphs.
\end{theorem}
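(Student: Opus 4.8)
The plan is to leverage the already-established characterization in Theorem~\ref{CharacterThm}(ii), which identifies Assur graphs with rigidity circuits via pin-splitting, together with the Berg--Jordan inductive construction of circuits (Theorem~\ref{BJ}). The overall strategy is: take an Assur graph $G(I,P;E)$ on five or more vertices, pass to its associated rigidity circuit $G^*$ (contract $P$ to $p^*$), apply Theorem~\ref{BJ} to obtain $G^*$ from $K_4$ by edge-splits and $2$-sums, and then \emph{lift} each step in that construction back to an operation on Assur graphs, checking that the lifted operations are exactly edge-split, $2$-sum, and pin-rearrangement (the last being needed to bookkeep which vertex plays the role of $p^*$ and how its incident edges are distributed among the pins).

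First I would record the base case: if $G^* = K_4$, then $G$ is by definition a basic Assur graph, and pin-rearrangement accounts for the finitely many ways $K_4$ can be split into an Assur graph with $\ge 2$ pins. For the inductive step, suppose $G^*$ arises from a smaller circuit $C'$ (or two smaller circuits $C_1', C_2'$ in the $2$-sum case) by one Berg--Jordan operation. The key observation is that the contraction vertex $p^*$ can be tracked through these operations. In the edge-split case: if the split occurs away from $p^*$, it corresponds directly to an edge-split performed on the Assur graph $G$ (the new inner vertex and its three attachments live among $I$, and Figure~\ref{assurwordfig06} already asserts this takes an Assur graph on $\ge 4$ vertices to an Assur graph, which follows because edge-split preserves circuits and hence, by Theorem~\ref{CharacterThm}(ii), the Assur property). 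If the split involves $p^*$, one may use pin-rearrangement to move $p^*$ to an incident vertex first, or argue that the split still corresponds to an edge-split on $G$ once the new inner vertex's attachments are chosen appropriately, possibly followed by a pin-rearrangement. In the $2$-sum case: $G^* = C_1' \oplus_2 C_2'$ glued along an edge $e$; one chooses $p^*$ to lie on $e$, so that $C_1'$ and $C_2'$ each contract to give smaller Assur graphs $G_1, G_2$, and the $2$-sum of Assur graphs (as in Figure~\ref{palo01Fig}, which removes a pin) reconstructs $G$ up to pin-rearrangement. An induction on the number of vertices (or edges) of $G^*$ then closes the argument.

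The main obstacle I expect is the careful handling of the contraction vertex $p^*$ during the inductive step — specifically, ensuring that every Berg--Jordan operation on $G^*$ can be realized while keeping $p^*$ (and the pin-distribution of its incident edges) in a position where the lifted operation is genuinely one of edge-split, $2$-sum, or pin-rearrangement on \emph{Assur} graphs, and that the intermediate objects really do satisfy $|P|\ge 2$ so they remain Assur graphs rather than degenerate pinned graphs. In particular, for a $2$-sum one must verify that $p^*$ can always be chosen on the gluing edge, and that after separating pins on each side, each side has at least two pins (this is where the hypothesis of five or more vertices, together with the fact that a circuit has no vertex of degree less than $3$, does the work). A secondary bookkeeping point is that the statement allows ``$2$-sums of smaller Assur graphs,'' so one must confirm the summands produced really are \emph{Assur} graphs of strictly smaller size, which again follows from Theorem~\ref{CharacterThm}(ii) since the Berg--Jordan summands $C_i'$ are themselves circuits. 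Once these are in place, the remaining verifications are the ``usual arguments'' of rigidity theory and a straightforward strong induction.
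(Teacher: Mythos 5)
Your proposal takes essentially the same route as the paper: in fact the paper prints no formal proof of this theorem at all, and the prose immediately preceding it describes exactly your strategy --- identify Assur graphs with rigidity circuits via Theorem~\ref{CharacterThm}(ii), build the circuit from $K_4$ by Berg--Jordan (Theorem~\ref{BJ}), and lift each edge-split and $2$-sum back through the pin-splitting, with pin-rearrangement absorbing the choices made at $p^*$. One caveat worth noting: pin-rearrangement as defined in the paper fixes the inner vertex set $I$ (it only redistributes the edges at $p^*$ among the pins), so it cannot ``move $p^*$ to an incident vertex''; the case where the Berg--Jordan edge-split would create $p^*$ itself is better handled by using the freedom in the Berg--Jordan reduction to reverse-split at an admissible degree-$3$ vertex other than $p^*$.
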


Since mechanical engineers might want to have additional tools readily
available for generating Assur graphs,
one can seek additional operations under which the class of Assur graphs is closed.
Vertex-split (creating two vertices of degree at least three) is another operation which takes a
rigidity circuit to a rigidity circuit,  and therefore takes
an Assur graph with at least three vertices to a larger Assur graph (Figure~\ref{assurwordfig10}).
This is, in a specific sense, the dual operation to edge-split~\cite{CMW}.
More generally, \cite{CMW} explores
a number of additional operations for generating larger circuits from smaller circuits.  Each of these
processes
will take an Assur graph to a larger Assur graph.
\begin{figure}[htb]
\centering
\includegraphics{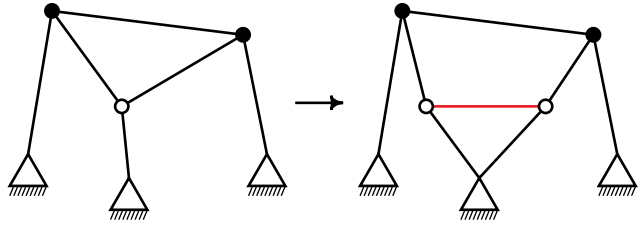}
\caption{Vertex split taking an Assur graph to an Assur graph. \label{assurwordfig10}}
\end{figure}

The inductive constructions for Assur graphs can be used to provide a  visual 
{\em certificate sequence} for an Assur graph.
If we are given a sequence of edge-splits and 2-sums starting from a dyad and ending with $G$,
see Figure~\ref{palo02Fig}, it is trivial to
verify that $G$ is an Assur graph. We constructed such a sequence in the proof of Theorem~\ref{PinThm}.
It is well known, see~\cite{TW}, that there are exponential algorithms to produce such a certificate.
 However, there are other
fast algorithms, for example the so called pebble games, 
see~\cite{JacobsHendrickson, pebblegame} to
detect all the rigidity properties
of graphs that can be adapted to verify the Assur property.

\begin{figure}[htb]
\centering
\includegraphics{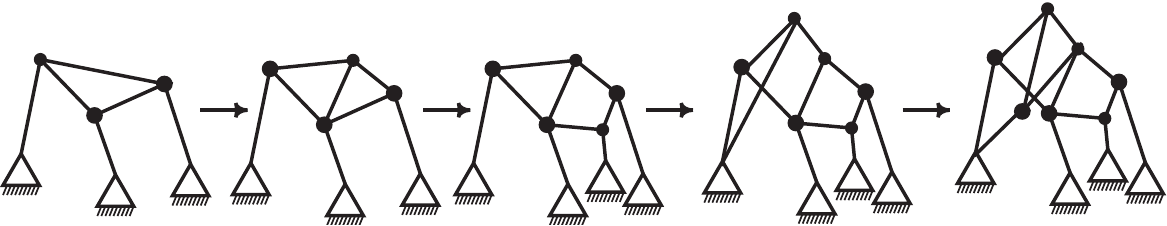}
\caption{Certificate sequence for the final Assur graph.\label{palo02Fig}}
\end{figure}

%%%%%%%%%%%%%%%%%%%%%%
\section{Concluding comments}
%%%%%%%%%%%%%%%%%%%%%%%%%%%%%%%
The paper introduces, for the first time, the concept of Assur
graphs, in the rigorous mathematical terminology of rigidity 
theory. This work paves a new channel for cooperation between the
communities in kinematics and in rigidity theory. An example for such
channel is the material appearing in \S2.7 showing how to
transform determinate trusses used by the kinematicians  into
isostatic frameworks, widely employed by the rigidity theory
and structural engineering 
communities.

At this point, it is hard to predict all the practical
applications that are to benefit from this new relation between
the disciplines. Nevertheless, we anticipate practical results
from the use of rigidity theory in mechanisms as introduced in the
paper. Examples of such results, include  using rigid circuits
from rigidity theory to find the proper decomposition sequence of
pinned isostatic framework into Assur components (Section 3.2) and
generation of Assur graphs by applying two known operations to
their corresponding rigidity circuits (Section 3.3).

It is expected that new opportunities will be opened up, for
example, for mechanical engineers to comprehend topics in rigidity
theory that are used today in many disciplines, including biology,
communications and more. Mechanical engineers in the west may be
motivated to use the Assur graphs (Assur groups) concept as it is
widely applied in eastern Europe and Russia.

  Decomposing a larger linkage into Assur graphs and analyzing these pieces one at a time
is an effective way to analyze the overall motion, working in layers.  This paper has given a precise
mathematical foundation for the Assur method as well as a proof of its correctness and completeness.

We have followed standard engineering practice and developed the theory in the
language of bar-and-joint frameworks, but of course there is no need to replace a larger link (rigid body)
with an isostatic  bar-and-joint sub-framework to apply the counting techniques. It is simply convenient
to do so in order to streamline notation and graphics. All of our results may be reworded in
terms of body and bar structures or body and pin frameworks.
This presentation would be much closer to the original example in
Figures~1 and~2 and the counts of \S2.2.

To extend this type of analysis to  3D linkages, the lack of good characterization of isostatic bar-and-joint
frameworks in $3$D  is an initial obstacle.   However, if we focus on body-and-bar
or body-and-hinge structures (the analog of body-and-bar and body-and-pin frameworks in the plane)
then the generic DOF of these structures is computable by analogous counting 
techniques~\cite{WW2,Wh},
and all our combinatorial methods  will carry over sucessfully.

\bibliographystyle{plain}
\bibliography{assurcombArX}

\end{document}